\documentclass[11pt]{amsart}
\usepackage{amsmath,amsthm,amsfonts,amscd,amssymb,eucal,latexsym,mathrsfs,stmaryrd,enumitem,bm,mathtools,microtype}

\setlength{\textwidth}{15.6cm}
\setlength{\oddsidemargin}{4.5mm}
\setlength{\evensidemargin}{4.5mm}

\newtheorem{theorem}{Theorem}[section]

\newtheorem{lemma}[theorem]{Lemma}
\newtheorem{proposition}[theorem]{Proposition}

\theoremstyle{definition}
\newtheorem{definition}[theorem]{Definition}

\newtheorem{question}[theorem]{Question}

\newcounter{theoremintro}
\newtheorem{theoremi}[theoremintro]{Theorem}

\newcommand{\sN}{{\mathcal N}}
\newcommand{\cU}{{\mathcal U}}

\newcommand{\cZ}{{\mathcal Z}}

\newcommand{\Zb}{{\mathbb Z}}
\newcommand{\Nb}{{\mathbb N}}

\newcommand{\sM}{{\mathcal M}}

\newcommand{\sP}{{\mathcal P}}

\newcommand{\sV}{{\mathcal V}}

\newcommand{\sZ}{{\mathcal Z}}

\newcommand{\id}{{\rm id}}

\numberwithin{equation}{section}

\allowdisplaybreaks

\begin{document}

\title{The small boundary property in products}
\author{David Kerr}
\address{David Kerr,
Mathematisches Institut,
University of M{\"u}nster,
Einsteinstr.\ 62,
48149 M{\"u}nster, Germany}
\email{kerrd@uni-muenster.de}

\author{Hanfeng Li}
\address{Hanfeng Li,
Department of Mathematics, SUNY at Buffalo, Buffalo, NY 14260-2900, USA
}
\email{hfli@math.buffalo.edu}

\date{June 17, 2024}

\begin{abstract}
For a continuous action $G\curvearrowright X$ of a countable group on a compact metrizable space we show that 
the following are equivalent:
(i) the action $G\curvearrowright X$ has the small boundary property and no finite orbits,
(ii) for every continuous action $H\curvearrowright Y$ of a countable group on a compact metrizable space,
the product action $G\times H\curvearrowright X\times Y$ has the small boundary property.
In particular, (ii) is automatic when $G$ is infinite and the action $G\curvearrowright X$ is minimal
and has the small boundary property.
The argument relies on a small boundary version of the Urysohn lemma.
\end{abstract}

\maketitle

\section{Introduction}

Over the last decade several analogies have emerged between regularity properties in the classification program
for nuclear C$^*$-algebras and properties of the topological dynamical systems (mainly
group actions on compact metrizable spaces)
that provide C$^*$-algebra theory with many of its basic models via the crossed product construction.
Prominent among these pairings (see \cite{Ker20,KerSza20,CasEviTikWhi22}) are
\begin{itemize}
\item $\sZ$-stability and almost finiteness, 

\item the uniform McDuff property (or, equivalently in the simple separable nuclear setting, 
uniform property Gamma \cite{CasEviTikWhi22}) and almost finiteness in measure.
\end{itemize}
Almost finiteness and almost finiteness in measure are both Rokhlin-type tiling properties that ask for the existence of 
open towers with F{\o}lner shapes and levels of small diameter \cite{Ker20,KerSza20}. 
The difference between the two is whether
the complement of the towers is small topologically (via a notion of subequivalence using open covers) 
or uniformly on invariant Borel probability measures. 
Due to the F{\o}lner requirement these properties necessitate that the acting group be amenable, while $\sZ$-stability and
the uniform McDuff property are independent of amenability/nuclearity, so that nuclearity
is implicitly being assumed when making the above pairings.
These analogies run quite far 
and, together with the property of dynamical comparison, have been instrumental in advancing the project of classifying
C$^*$-crossed products. In each case
the backwards implication holds when passing from a minimal action of a countably infinite group on 
a compact metrizable space to its corresponding crossed product \cite{Ker20,KerSza20}, and it is conceivable that
one even has an equivalence in the nuclear setting. Indeed $\sZ$-stability is known to fail
for the typical examples of minimal actions of countable amenable groups that are not almost finite \cite{GioKer10,HirPhi22}.

At the same time, the structural homologies supporting these relationships 
are far from perfect and the discrepancies at play are closely tied up with a number of 
subtle issues in topological dynamics.
One can particularly appreciate some of the sticking points
when looking at behaviour under taking products, or tensor products in the case of C$^*$-algebras. 
A product $G\times H \curvearrowright X\times Y$ of two actions $G\curvearrowright X$
and $H\curvearrowright Y$ on compact spaces naturally gives rise to a minimal C$^*$-tensor product
$C(X\times Y)\rtimes_\lambda (G\times H) \cong (C(X)\rtimes_\lambda G) \otimes_{{\rm min}} (C(Y)\rtimes_\lambda H)$
of the reduced crossed products of the factors. Now $\sZ$-stability passes 
to the minimal tensor product with any other C$^*$-algebra 
directly by definition, without any nuclearity assumptions, 
and so $C(X\times Y)\rtimes_\lambda (G\times H)$ will have
this property as soon as one of the tensor factors does. 
The same can be said for the uniform McDuff property granted that one works within the relevant universe of unital tracial
C$^*$-algebras and actions admitting invariant Borel probability measures.
On the dynamical side, 
the tower structure that comes from assuming one of the factors is almost finite or almost finite in measure 
will naturally pass to the product through amplification but one will lose both the F{\o}lner condition on the tower shapes
and the small diameter condition on the tower levels, and it is not at all clear 
how to engineer these features unless one has some subdivisibility mechanism at hand. 
It turns out however that one can establish the desired subdivisibility under such circumstances 
(assuming $G$ and $H$ are infinite)
in the form of the small boundary property (SBP). In the case that the actions $G\curvearrowright X$
and $H\curvearrowright Y$ are free this was accomplished independently by Kopsacheilis, Liao, Tikuisis, and Vaccaro 
\cite{KopLiaTikVac24} and Elliott and Niu \cite{EllNiu24}
by passing through a version of property Gamma for inclusions at the C$^*$-algebra level.
The SBP is actually equivalent, via Ornstein--Weiss tiling technology, to almost finiteness in measure
when the acting group is amenable and the action is free \cite{KerSza20},
and so one obtains the conclusion that if $G\curvearrowright X$
and $H\curvearrowright Y$ are free and one of them is almost finite in measure then 
the product $G\times H \curvearrowright X\times Y$ is almost finite in measure.
Kopsacheilis, Liao, Tikuisis, and Vaccaro showed moreover, by passing through 
a version of tracial $\sZ$-stability for inclusions and making use of dynamical comparison
and almost finiteness in measure via \cite{KerSza20,LiaTik22}, that the corresponding statement also holds 
for almost finiteness \cite{KopLiaTikVac24}.

By definition, the SBP requires that the topology admit a basis of open sets whose boundaries are null 
for every invariant Borel probability measure, a condition which is meaningful whether or not the acting group is amenable.
In the amenable setting it implies mean dimension zero \cite{LinWei00} and is equivalent to it when 
the group is $\Zb^d$ and the action has the marker property (e.g., when it is free and minimal) \cite{Lin99,GutLinTsu16}, 
and also more generally whenever
the action has the uniform Rokhlin property \cite{Niu21}. 
It is a simple observation that a product $G\times H \curvearrowright X\times Y$ of actions of infinite amenable groups always
has mean dimension zero, and so
when $G$ and $H$ are both finitely generated free Abelian groups and the actions are free and minimal
one can draw the much stronger and quite surprising conclusion that the product action
always has the SBP and hence, via \cite{Nar22}, is always almost finite. 

The uniform McDuff property and the SBP are quite different in nature at the technical level, 
and perhaps it is unique to amenability, via the connection to almost finiteness in measure, 
that one is able to compare them in that setting.
Nevertheless, we show in the present paper that the SBP does continue to share the same permanence behaviour in products 
beyond the amenable realm. Our approach is completely different from and more direct than the ones 
in \cite{KopLiaTikVac24,EllNiu24}.
It also yields a more general statement in the amenable case that applies in particular to all minimal actions
without the hypothesis of freeness that is present in all of the arguments above
involving the SBP and that is also, at least to a certain extent,
built into the definitions of almost finiteness and almost finiteness in measure
(these concepts are also viable for actions that are essentially free \cite{GarGefGesKopNar24}, 
but there are minimal topologically free actions of amenable groups that do not have this property \cite{Jos24}).

\begin{theoremi}\label{T-main}
Let $G\curvearrowright X$ be a continuous action of a countable group on a compact metrizable space. Then the following are equivalent:
\begin{enumerate}
\item the action $G\curvearrowright X$ has the SBP and no finite orbits,
\item for every continuous action $H\curvearrowright Y$ of a countable group on a compact metrizable space, the product action $G\times H\curvearrowright X\times Y$ has the SBP.
\end{enumerate}
In particular, condition (2) holds whenever $G$ is infinite and the
action $G\curvearrowright X$ is minimal and has the SBP.
\end{theoremi}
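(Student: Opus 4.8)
The plan is to prove the two implications separately, with (2)$\Rightarrow$(1) being routine and (1)$\Rightarrow$(2) carrying essentially all the weight. For (2)$\Rightarrow$(1), taking $H$ trivial and $Y$ a one-point space identifies the product with $G\curvearrowright X$, so its SBP is immediate. For the absence of finite orbits I would argue by contradiction: if $O=Gx$ is a finite orbit, take $H$ trivial and $Y=[0,1]$ with the trivial action, so that $G\times H\curvearrowright X\times[0,1]$ is $g\cdot(x,t)=(gx,t)$. The closed invariant subset $O\times[0,1]$, a finite disjoint union of intervals, carries the invariant measures $m_t=\frac1{|O|}\sum_{o\in O}\delta_{(o,t)}$ for every $t\in[0,1]$. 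Since SBP passes to closed invariant subsystems (relative boundaries lie in ambient boundaries, and invariant measures of the subsystem are invariant for the ambient system), $O\times[0,1]$ would have the SBP; but a relatively open $\bigsqcup_o\{o\}\times W_o$ has boundary $\bigsqcup_o\{o\}\times\partial W_o$, which is $m_t$-null for all $t$ only when each $W_o$ is empty or all of $[0,1]$, and such sets cannot separate points in a fiber. This contradiction gives no finite orbits, and the final ``in particular'' assertion then follows since, for infinite $X$, minimality forbids the proper closed invariant subset that a finite orbit would constitute.

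For (1)$\Rightarrow$(2), fix $H\curvearrowright Y$ and aim to build a basis of $X\times Y$ by open sets with null boundary. The $X$-direction is harmless: if $U\subseteq X$ is open with null boundary (supplied by the SBP of $G\curvearrowright X$), then $\partial(U\times Y)=\partial U\times Y$ is null for every $G\times H$-invariant $m$, since its $X$-marginal is $G$-invariant and $m(\partial U\times Y)=m_X(\partial U)=0$. The sole difficulty is the $Y$-direction, i.e.\ the term $\overline U\times\partial V$ in the boundary of a product set, which can genuinely be charged (this is exactly the phenomenon exploited above for finite orbits). As finite intersections of null-boundary sets have null boundary, it suffices to separate points with distinct $Y$-coordinates: for each $y_0$ and open $V\ni y_0$ I must produce an open $B$ with $X\times\{y_0\}\subseteq B\subseteq X\times V$ and $\partial B$ null.

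Here is the construction I would use. First invoke the small-boundary Urysohn lemma, together with the absence of finite orbits — equivalently, the atomlessness of every $G$-invariant measure — to obtain a continuous $f\colon X\to[0,1)$ all of whose level sets $f^{-1}(t)$ are null for every $G$-invariant measure. Pick a continuous $\psi\colon Y\to[0,1]$ with $\psi(y_0)=1$ and $\psi=0$ off $V$, and set
\[
B=\{(x,y)\in X\times Y:\ \psi(y)>f(x)\}.
\]
Then $B$ is open, contains $X\times\{y_0\}$ since $f<1=\psi(y_0)$, and is contained in $X\times V$; its boundary lies in $\{(x,y):\psi(y)=f(x)\}$. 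The crux is that this last set is null for every $G\times H$-invariant $m$. To see this, disintegrate $m=\int_Y\mu_y\,dm_Y(y)$ over its $H$-invariant $Y$-marginal. Because $(g,e)\in G\times H$ acts by $(x,y)\mapsto(gx,y)$ and hence fixes the $Y$-coordinate, invariance of $m$ forces $g_*\mu_y=\mu_y$ for $m_Y$-a.e.\ $y$; running over the countable group $G$, the fiber $\mu_y$ is a $G$-invariant probability measure on $X$ for a.e.\ $y$. Therefore $\mu_y(f^{-1}(\psi(y)))=0$ for a.e.\ $y$, and integrating yields $m(\{\psi=f\})=0$. Intersecting these $B$'s with the null-boundary sets $U\times Y$ produces the desired basis.

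The main obstacle is this construction, and two points deserve care. The delicate input is producing $f$ with all level sets null: SBP alone yields sets with null boundary, and it is the small-boundary Urysohn lemma, fed by the atomlessness from the no-finite-orbit hypothesis, that upgrades this to a continuous function with null rather than merely small fibers — and a finite orbit makes such an $f$ impossible, matching the necessity established in the other direction. The conceptual heart, however, is the identity $g_*\mu_y=\mu_y$: it is precisely the triviality of the $G$-action on the $Y$-coordinate inside $G\times H$ that renders the $Y$-fiber measures $G$-invariant, thereby converting ``$f$ has null level sets'' into ``$B$ has null boundary.'' Everything else — verifying that the sets $B$ together with the $U\times Y$ form a basis, and that intersections preserve null boundaries — is bookkeeping.
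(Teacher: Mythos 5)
Your proof is correct, and while the geometric heart of (1)$\Rightarrow$(2) coincides with the paper's --- the only dynamical input is a continuous $f\colon X\to[0,1]$ with $\sM_G(X)$-null level sets supplied by the small-boundary Urysohn lemma (Proposition~\ref{P-SBP to function}), used to build an open set whose boundary lies in $\{(x,y):\psi(y)=f(x)\}$, where the paper uses $\{(x,y):\rho(y,y_1)=f(x)+1\}$ with a metric in place of your Urysohn function $\psi$ --- you establish the crucial nullity of that set by a genuinely different and lighter route. The paper reduces to ergodic $m$ via ergodic decomposition, proves that ergodic measures of $G\times H\curvearrowright X\times Y$ are product measures (Lemma~\ref{L-ergodic for product}, which invokes the comparison property in the full group of an ergodic p.m.p.\ action), and then applies Fubini together with the countability of $\{r:\mu_Y(S_r)>0\}$. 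You instead disintegrate an arbitrary invariant $m$ over its $Y$-marginal and note that, since each $(g,e)$ fixes the $Y$-coordinate, essential uniqueness of the disintegration forces almost every fibre measure $\mu_y$ to be $G$-invariant, so that $\mu_y(f^{-1}(\psi(y)))=0$ pointwise a.e.; this bypasses ergodicity, the product-measure lemma, and the full-group machinery entirely, at the cost only of the standard disintegration theorem. Your (2)$\Rightarrow$(1) is likewise more elementary: where the paper feeds the shift on $[0,1]^{\Zb}$ (cited to have positive mean dimension, hence no SBP, and atomic invariant measures) into Lemma~\ref{L-SBP to nonatomic}, you take the trivial action on $[0,1]$ and obtain the contradiction by hand on the closed invariant set $O\times[0,1]$, using that the SBP passes to closed invariant subsystems. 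Two small points: rescale the Urysohn-lemma output into $[0,1)$ (e.g.\ replace $f$ by $f/2$, which preserves null level sets) so that $f<\psi(y_0)=1$ holds everywhere, as your requirement $X\times\{y_0\}\subseteq B$ demands; and your reading of the final assertion, which needs $X$ infinite so that minimality rules out finite orbits, is the intended one.
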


The arguments in our proof of Theorem~\ref{T-main} do not actually themselves have much 
to do with dynamics, which really only enters at the point where we need to know that ergodic measures for the product action
are product measures (Lemma~\ref{L-ergodic for product}).
The crucial tool is a small boundary version of the Urysohn lemma, which we establish in Section~\ref{S-Urysohn}
as Proposition~\ref{P-SBP to function}.
This is related to, but somewhat different from, the recursive constructions used in \cite{ShuWei91} and \cite{Lin95}
to control the value of entropy in factors of actions with the SBP, which was the original motivation 
for the introduction of the SBP.
In Section~\ref{S-product actions} we establish Theorem~\ref{T-main} by applying our Urysohn-type lemma 
to produce sets in the product with small boundary.
For this we rely on a characterization of the SBP in terms of pairs of points (Proposition~\ref{P-small separation})
that, as an offshoot, suggests how the analysis of small boundaries can be localized  
in the spirit of entropy pairs (see \cite{KerLi16}) and mean dimension pairs \cite{GarGut24}. 
We briefly indicate in Section~\ref{S-SBP pairs} how this localization can be set up through the notion of SBP pair.

That amenability and freeness turn out not to be relevant for Theorem~\ref{T-main}
is in the spirit of \cite{KerKopPet24}, where it is shown that the SBP holds whenever
the space of invariant Borel probability measures is a Bauer simplex with finite-dimensional extreme boundary.
In fact the only essential way in which dynamics gets used in both \cite{KerKopPet24} and the present paper,
besides the affine function realization in \cite{KerKopPet24},
is in the application of the comparison property for ergodic probability-measure-preserving actions in order
to assemble disjoint sets of equal measure into a tower whose shape is a subset of the full group, 
an operation that requires neither amenability nor freeness.

Finally we mention one consequence of Theorem~\ref{T-main} for the classifiability of crossed products.
Let $G\curvearrowright X$ and $H\curvearrowright Y$ be minimal actions of 
finitely generated groups of polynomial growth on compact metrizable spaces and suppose that
one of them has the SBP. By \cite{Ker20,Nar22} and the previously known permanence of the SBP 
under products discussed above, if these two actions are both free then 
the crossed product $C(X\times Y)\rtimes_\lambda (G\times H)$ 
of the product action $G\times H \curvearrowright X\times Y$ falls under the
scope of the classification theorem for simple, separable, unital, nuclear, $\cZ$-stable $C^*$-algebras satisfying the UCT
(a discussion of which can be found for example in the introduction to \cite{CarGabSchTikWhi23}). 
Together with \cite[Corollary D]{GarGefGesKopNar24}, Theorem~\ref{T-main} shows that
this classifiability is in fact equivalent to both of the factor actions being topologically free
(a necessary condition for ensuring that the C$^*$-algebra is simple).
\medskip

\noindent{\it Acknowledgements.}
The authors were supported by the Deutsche Forschungsgemeinschaft
(DFG, German Research Foundation) under Germany's Excellence Strategy EXC 2044-\linebreak 390685587,
Mathematics M{\"u}nster: Dynamics--Geometry--Structure, and 
by the SFB 1442 of the DFG.

\section{A small boundary Urysohn lemma} \label{S-Urysohn}

Throughout $X$ is a compact metrizable space and $\sN$ a closed subset of 
the space $\sM(X)$ of Borel probability measures on $X$ equipped with the weak$^*$ topology.

\begin{definition} \label{D-small}
We say a Borel subset $Y$ of $X$ is {\it $\sN$-small} if $\sup_{\mu\in \sN}\mu(Y)=0$. We say $X$ has  
the {\it $\sN$-SBP} ($\sN$-small boundary property) if there is a basis for its topology 
consisting of open sets whose boundary is $\sN$-small.
\end{definition}

Our goal is to establish in Proposition~\ref{P-SBP to function} a Urysohn lemma for the $\sN$-SBP.

\begin{proposition} \label{P-small separation}
The following are equivalent:
\begin{enumerate}
\item $X$ has the $\sN$-SBP.
\item For any two disjoint closed subsets $A_1$ and $A_2$ of $X$ 
there is an open set $U$ such that $A_1\subseteq U$, $A_2\cap \overline{U}=\emptyset$, and $\partial U$ is $\sN$-small.
\item For any two distinct $x, y\in X$ there is an open set $U\subseteq X$ such that $x\in U$, $y\not\in \overline{U}$ and $\partial U$ is $\sN$-small.
\end{enumerate}
\end{proposition}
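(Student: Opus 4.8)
The plan is to establish the cycle of implications (1) $\Rightarrow$ (2) $\Rightarrow$ (3) $\Rightarrow$ (1). Two elementary observations will be used throughout. First, a finite union of $\sN$-small sets is again $\sN$-small: if each $Y_i$ satisfies $\mu(Y_i)=0$ for every $\mu\in\sN$, then $\mu\bigl(\bigcup_i Y_i\bigr)=0$ for every such $\mu$ by additivity, so the supremum over $\sN$ remains zero. Second, for any finite family of open sets one has the boundary estimates $\partial\bigl(\bigcup_i U_i\bigr)\subseteq\bigcup_i\partial U_i$ and $\partial\bigl(\bigcap_i U_i\bigr)\subseteq\bigcup_i\partial U_i$, which follow by induction from $\partial(A\cup B)\subseteq\partial A\cup\partial B$ and $\partial(A\cap B)\subseteq\partial A\cup\partial B$. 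Combining the two, whenever one patches together finitely many small-boundary open sets by union or intersection, the result again has $\sN$-small boundary; this is the mechanism that drives every step.

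For (1) $\Rightarrow$ (2) I would fix disjoint closed sets $A_1,A_2$. For each $x\in A_1$, regularity of the compact metrizable space $X$ provides an open $W$ with $x\in W\subseteq\overline{W}\subseteq X\setminus A_2$, and the assumed small-boundary basis then yields a small-boundary open set $V_x$ with $x\in V_x\subseteq W$; in particular $\overline{V_x}\cap A_2=\emptyset$. Since $A_1$ is compact, finitely many $V_{x_1},\dots,V_{x_n}$ cover it, and $U:=\bigcup_i V_{x_i}$ does the job: $A_1\subseteq U$, the closure $\overline{U}=\bigcup_i\overline{V_{x_i}}$ is disjoint from $A_2$, and $\partial U$ is $\sN$-small by the remarks above. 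The implication (2) $\Rightarrow$ (3) is then immediate on specializing to $A_1=\{x\}$ and $A_2=\{y\}$, which are disjoint closed sets in the Hausdorff space $X$.

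The real work is in (3) $\Rightarrow$ (1), where a full basis must be reconstructed from the pointwise separation data, and I expect this to be the only genuine obstacle. It suffices to show that for every $x\in X$ and every open neighbourhood $O$ of $x$ there is a small-boundary open $V$ with $x\in V\subseteq O$, since this is exactly the basis criterion. I would set $A_2:=X\setminus O$ and, for each $y\in A_2$, invoke condition (3) to obtain an open $U_y$ with $x\in U_y$, $y\notin\overline{U_y}$, and $\partial U_y$ $\sN$-small. The open sets $X\setminus\overline{U_y}$ cover the compact set $A_2$, so finitely many $X\setminus\overline{U_{y_1}},\dots,X\setminus\overline{U_{y_m}}$ suffice; taking complements gives $\bigcap_i\overline{U_{y_i}}\subseteq O$. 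Putting $V:=\bigcap_i U_{y_i}$, we then have $x\in V$ and $\overline{V}\subseteq\bigcap_i\overline{U_{y_i}}\subseteq O$, while $\partial V$ is $\sN$-small by the finite-intersection boundary estimate. The point to get right here is the conversion of separation of individual pairs of points into a neighbourhood basis: compactness is what makes the passage finite, and the finite-intersection boundary bound is precisely what keeps the constructed boundaries small.
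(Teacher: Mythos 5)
Your proof is correct and follows essentially the same route as the paper: the same compactness-plus-finite-union argument for (1)$\Rightarrow$(2), the same specialization to singletons for (2)$\Rightarrow$(3), and the same covering of $X\setminus O$ by the sets $X\setminus\overline{U_y}$ followed by a finite intersection for (3)$\Rightarrow$(1), with the boundary estimates $\partial(\bigcup_i U_i)\subseteq\bigcup_i\partial U_i$ and $\partial(\bigcap_i U_i)\subseteq\bigcup_i\partial U_i$ doing the same work in both. No changes needed.
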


\begin{proof} 
(1)$\Rightarrow$(2). 
By (1), for each $x\in A_1$ there is an open neighbourhood $U_x$ of $x$ such that $\overline{U_x}\subseteq X\setminus A_2$ and $\partial U_x$ is $\sN$-small. Since $A_1$ is compact, there is a finite subset $W$ of $A_1$ such that the sets $U_x$ for $x\in W$ cover $A_1$. Set $U=\bigcup_{x\in W}U_x$. Then $U$ is open, $A_1\subseteq U$, and $\overline{U}=\bigcup_{x\in W}\overline{U_x}\subseteq X\setminus A_2$. Note that $\partial U\subseteq \bigcup_{x\in W}\partial U_x$ is $\sN$-small. Thus (2) holds.

(2)$\Rightarrow$(3). Trivial.

(3)$\Rightarrow$(1). 
Let $x\in X$ and let $V$ be an open neighbourhood of $x$. By (3), for each $y\in X\setminus V$ we can find an open neighbourhood $U_y$ of $x$ such that $y\in X\setminus \overline{U_y}$ and $\partial U_y$ is $\sN$-small. Since $X\setminus V$ is compact, there is a finite subset $W$ of $X\setminus V$ such that the sets $X\setminus \overline{U_y}$ for $y\in W$ cover $X\setminus V$. Then $U:=\bigcap_{y\in W}U_y$ is an open neighbourhood of $x$, and
\begin{gather*}
\overline{U}\subseteq \bigcap_{y\in W}\overline{U_y}=X\setminus \bigg(X\setminus \bigcap_{y\in W}\overline{U_y}\bigg)=X\setminus \bigcup_{y\in W}(X\setminus \overline{U_y})\subseteq X\setminus (X\setminus V)=V.
\end{gather*}
Note that $\partial U\subseteq \bigcup_{y\in W}\partial U_y$ is $\sN$-small. Thus (1) holds.
\end{proof}

\begin{lemma} \label{L-small separation}
Suppose that $X$ has the $\sN$-SBP. Let $A$ be an $\sN$-small closed subset of $X$ and let $V$ be an open subset of $X$ such that $A\subseteq V$. Let $\varepsilon>0$. Then there is an open set $U\subseteq X$ such that $A\subseteq U\subseteq \overline{U}\subseteq V$, $\partial U$ is $\sN$-small, and $\mu(U)<\varepsilon$ for all $\mu\in \sN$.
\end{lemma}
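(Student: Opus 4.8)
The plan is to get all the separation requirements ($A\subseteq U\subseteq\overline U\subseteq V$ with $\partial U$ being $\sN$-small) directly from Proposition~\ref{P-small separation}, and then to extract the genuinely new ingredient---the uniform bound $\mu(U)<\varepsilon$ over $\mu\in\sN$---from a compactness argument. The key point is that $\sN$, being a closed subset of the weak$^*$-compact space $\sM(X)$, is itself compact, and on a compact space a decreasing sequence of upper semicontinuous functions tending pointwise to $0$ converges uniformly (the semicontinuous form of Dini's theorem). I would produce a nested sequence of open neighbourhoods of $A$ with $\sN$-small boundaries shrinking down to $A$ and apply this principle to the evaluation maps $\mu\mapsto\mu(\overline{W_n})$.

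First I would fix a compatible metric $d$ on $X$ and, writing $B_n=\{x\in X:d(x,A)<1/n\}$ for the $1/n$-neighbourhoods of $A$ (so that $\bigcap_n B_n=A$ since $A$ is closed), construct open sets $W_0\supseteq W_1\supseteq\cdots$ recursively. Set $W_0=V$. Given $W_{n-1}\supseteq A$, the set $W_{n-1}\cap B_n$ is open and contains $A$, so $A$ and $X\setminus(W_{n-1}\cap B_n)$ are disjoint closed sets; applying Proposition~\ref{P-small separation}(2) yields an open $W_n$ with $A\subseteq W_n$, $\overline{W_n}\subseteq W_{n-1}\cap B_n$, and $\partial W_n$ being $\sN$-small. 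By construction each $\overline{W_n}\subseteq V$, the closures decrease, and $\bigcap_n\overline{W_n}\subseteq\bigcap_n B_n=A\subseteq\bigcap_n\overline{W_n}$, so $\bigcap_n\overline{W_n}=A$.

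Then I would use that for each closed set $C$ the map $\mu\mapsto\mu(C)$ is upper semicontinuous on $\sM(X)$, so that $f_n\colon\mu\mapsto\mu(\overline{W_n})$ is a decreasing sequence of upper semicontinuous functions on the compact set $\sN$. Since $\overline{W_n}\downarrow A$ and $\mu(A)=0$ for every $\mu\in\sN$ (because $A$ is $\sN$-small), continuity of measures from above gives $f_n(\mu)\downarrow 0$ pointwise, and Dini forces $\sup_{\mu\in\sN}f_n(\mu)\to 0$. As $\partial W_n$ is $\sN$-small we have $\mu(W_n)=\mu(\overline{W_n})$ for every $\mu\in\sN$, so choosing $n$ with $\sup_{\mu\in\sN}\mu(\overline{W_n})<\varepsilon$ and putting $U=W_n$ delivers all the conclusions.

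The hard part will be recognizing that uniformity over $\sN$ is exactly a Dini phenomenon: the separation machinery of Proposition~\ref{P-small separation} handles the topological sandwiching essentially for free, but on its own it only provides, for each individual measure, a small neighbourhood, and it is the compactness of $\sN$ together with the monotone upper-semicontinuous structure of the evaluation maps that upgrades this to a single neighbourhood working simultaneously for all $\mu\in\sN$.
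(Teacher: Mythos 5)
Your proof is correct, and it reaches the conclusion by a genuinely different organization of the same two underlying ingredients, namely weak$^*$ compactness of $\sN$ and the portmanteau upper semicontinuity of $\mu\mapsto\mu(C)$ for closed $C$. The paper works measure by measure: for each $\mu\in\sN$ it uses regularity to produce a single open set $U_\mu$ with $A\subseteq U_\mu\subseteq\overline{U_\mu}\subseteq V$ and $\mu(\overline{U_\mu})<\varepsilon$, upgrades this to a weak$^*$ neighbourhood $O_\mu$ of $\mu$ on which the bound persists, extracts a finite subcover of $\sN$, intersects the finitely many $U_\mu$ to get the uniform measure bound, and only then invokes Proposition~\ref{P-small separation}(2) a single time to shrink this intersection to a set with $\sN$-small boundary. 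You reverse the order: you apply Proposition~\ref{P-small separation}(2) at every stage of a countable recursion to build a nested sequence with $\sN$-small boundaries and $\overline{W_n}\downarrow A$, and then extract the uniform bound from the semicontinuous Dini theorem, using continuity of measures from above and $\mu(A)=0$ for the pointwise convergence. The Dini step is sound (the sets $\{\mu\in\sN:\mu(\overline{W_n})<\varepsilon\}$ are open by upper semicontinuity, increase, and cover $\sN$, so one of them is all of $\sN$), and it is in effect the same finite-subcover argument the paper runs by hand, just packaged as a named principle. Your route trades the paper's one-shot finite intersection for a countable construction, and in exchange it cleanly separates the topological sandwiching from the measure estimate and avoids any explicit appeal to regularity of the individual measures; the details you supply ($\bigcap_n B_n=A$ because $A$ is closed, nonnegativity of the $f_n$, and $\mu(W_n)\le\mu(\overline{W_n})$) all check out.
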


\begin{proof} 
Let $\mu\in \sN$. Since $\mu(A)=0$, we can find an open set $U_\mu$ such that $A\subseteq U_\mu\subseteq \overline{U_\mu}\subseteq V$ and $\mu(\overline{U_\mu})<\varepsilon$. Then we can find an open neighbourhood $O_\mu$ of $\mu$ in $\sN$ such that $\nu(\overline{U_\mu})<\varepsilon$ for all $\nu\in O_\mu$.

Since $\sN$ is compact, we can find a finite subset $W$ of $\sN$ such that the sets $O_\mu$ for $\mu\in W$ cover $\sN$. Set $U=\bigcap_{\mu\in W}U_\mu$. Then $U$ is open, $A\subseteq U\subseteq V$, and $\nu(U)<\varepsilon$ for all $\nu\in \sN$. By condition (2) in Proposition~\ref{P-small separation} we can find an open set $U'$ such that $A\subseteq U'\subseteq \overline{U'}\subseteq U$ and $\partial U'$ is $\sN$-small.
\end{proof}

\begin{lemma} \label{L-SBP to function1}
Suppose that $X$ has the $\sN$-SBP. Let $A_0$ and $A_1$ be disjoint closed subsets of $X$, let $\mu\in \sM(X)$ be atomless, 
and let $\varepsilon, C>0$. Then there are an integer $M\ge C$ and open sets $U_1, \dots, U_M \subseteq X$ satisfying the following conditions:
\begin{enumerate}
\item $A_0\subseteq U_1$,
\item $\overline{U_j}\subseteq U_{j+1}$ for all $j=1, \dots, M-1$,
\item $U_M=X\setminus A_1$,
\item $\partial U_j$ is $\sN$-small for all $j=1, \dots, M-1$,
\item $\mu(U_1\setminus A_0)<\varepsilon$,
\item $\mu(U_{j+1}\setminus \overline{U_j})<\varepsilon$ for all  $j=1, \dots, M-1$.
\end{enumerate}
\end{lemma}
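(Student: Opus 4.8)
The plan is to build the chain greedily, starting from $A_0$ and filling up the open set $W := X\setminus A_1$ layer by layer, where at each stage we adjoin a layer of $\mu$-measure less than $\varepsilon$ whose outer boundary is $\sN$-small. Two demands pull against each other here: the boundaries must be $\sN$-small, which forces us to work with sets drawn from the $\sN$-SBP basis, while the increments must be small and uniform for $\mu$, a completely unrelated (and possibly singular) measure. The device that reconciles them is a subdivision lemma: for any open $G$ and any $\eta>0$ one can find open sets $\emptyset=P_0\subseteq P_1\subseteq\cdots\subseteq P_m$ with $\sN$-small boundaries, $\overline{P_m}\subseteq G$, $\mu(G\setminus P_m)<\eta$, and $\mu(P_i\setminus P_{i-1})<\eta$ for every $i$. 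To prove this I would use that $\mu$ is atomless: around each $x\in G$ the SBP furnishes an open neighbourhood $B_x$ with $\sN$-small boundary and $\overline{B_x}\subseteq G$, which by atomlessness can be taken with $\mu(B_x)<\eta$; covering a compact $K\subseteq G$ with $\mu(G\setminus K)<\eta$ by finitely many such $B_x$ and setting $P_i=B_{x_1}\cup\cdots\cup B_{x_i}$ does the job, since finite unions preserve $\sN$-smallness of boundaries.

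With this in hand I would set up two elementary operations. The first is thickening: given a closed set $F$ inside an open set $V$, outer regularity and normality yield an open $O$ with $F\subseteq O\subseteq\overline{O}\subseteq V$ and $\mu(O\setminus F)$ as small as we like, and then condition (2) of Proposition~\ref{P-small separation} produces an $\sN$-small-boundary open $U$ with $F\subseteq U\subseteq\overline{U}\subseteq O$, whence $\mu(U\setminus F)$ is small. The second is the chunk extraction above. The construction then alternates the two. First, thickening $A_0$ inside $W$ with tolerance $\varepsilon$ gives $U_1$ with $A_0\subseteq U_1\subseteq\overline{U_1}\subseteq W$, $\partial U_1$ $\sN$-small, and $\mu(U_1\setminus A_0)<\varepsilon$, settling (1) and (5). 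Inductively, given $U_j$ with $\overline{U_j}\subseteq W$: if $\mu(W\setminus\overline{U_j})<\varepsilon$ we stop and put $U_{j+1}=W=X\setminus A_1$; otherwise we thicken $U_j$ to an $\sN$-small-boundary $\tilde U_j$ with $\overline{U_j}\subseteq\tilde U_j\subseteq\overline{\tilde U_j}\subseteq W$ and $\mu(\tilde U_j\setminus\overline{U_j})<\varepsilon/4$, apply the subdivision lemma to $G=W\setminus\overline{\tilde U_j}$ to pull out a chunk $P_j$ with $\overline{P_j}\subseteq G$, $\partial P_j$ $\sN$-small, and $\mu(P_j)\in[\varepsilon/4,\varepsilon/2)$ (take the first partial union reaching measure $\varepsilon/4$), and set $U_{j+1}=\tilde U_j\cup P_j$.

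The reason for routing through $\tilde U_j$ is precisely the subtlety I expect to be the main obstacle: condition (2) demands $\overline{U_j}\subseteq U_{j+1}$, yet a chunk added inside $W\setminus\overline{U_j}$ sits away from $\overline{U_j}$ and never engulfs the boundary of $U_j$, so a naive union fails the closure containment. Thickening first to an open $\tilde U_j\supseteq\overline{U_j}$ repairs this, since then $\overline{U_j}\subseteq\tilde U_j\subseteq U_{j+1}$. The rest is bookkeeping: $\partial U_{j+1}\subseteq\partial\tilde U_j\cup\partial P_j$ is $\sN$-small, giving (4); $\mu(U_{j+1}\setminus\overline{U_j})\le\mu(\tilde U_j\setminus\overline{U_j})+\mu(P_j)<\varepsilon/4+\varepsilon/2<\varepsilon$, giving (6); and since $P_j$ is disjoint from $\overline{\tilde U_j}$ with $\mu(P_j)\ge\varepsilon/4$, each step raises $\mu(U_j)$ by at least $\varepsilon/4$, so the process terminates after finitely many steps with $U_M=X\setminus A_1$, giving (3). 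Finally, to force $M\ge C$ I would repeatedly insert extra sets into the chain via condition (2) of Proposition~\ref{P-small separation}, each insertion splitting a gap $\overline{U_j}\subseteq U_{j+1}$ into two and hence only shrinking the measures appearing in (6), until the length exceeds $C$.
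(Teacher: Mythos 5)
Your argument is correct, but the mechanism you use to control $\mu$ on the layers is genuinely different from the paper's. The paper fixes $M\ge C$ in advance with $\frac{1}{M}\mu(X\setminus(A_0\cup A_1))<\frac{1}{2}\varepsilon$, invokes the measure isomorphism of $(X,\mu)$ with the unit interval to cut $X\setminus(A_0\cup A_1)$ into $M$ Borel pieces of equal $\mu$-measure with compact cores $Y_1,\dots,Y_M$, and then builds the chain in one pass via Proposition~\ref{P-small separation}(2) so that $U_j$ contains $\overline{U_{j-1}}\cup Y_j$ while $\overline{U_j}$ avoids $A_1\cup Y_{j+1}\cup\dots\cup Y_M$; the bound $\mu(U_{j+1}\setminus\overline{U_j})<\varepsilon$ is then obtained \emph{by avoidance}, since that set is disjoint from every $Y_k$ with $k\ne j+1$ and hence carries at most $\frac{1}{M}\mu(X\setminus(A_0\cup A_1))$ plus the error from the cores. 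You instead control each layer \emph{from the inside}: atomlessness plus regularity gives every point a small-$\mu$-measure SBP-basis neighbourhood, and greedily accumulating these yields chunks of measure in $[\varepsilon/4,\varepsilon/2)$, with termination forced by the lower bound. Your route avoids the measure-isomorphism theorem and is more elementary in that respect, at the cost of not knowing $M$ in advance (hence your padding step to reach $M\ge C$, which works) and of needing the collar $\tilde U_j$ to restore $\overline{U_j}\subseteq U_{j+1}$. The only point to make explicit is that for the lower bound on $\mu(W\setminus\overline{\tilde U_j})$ you need $\mu(\overline{\tilde U_j}\setminus\overline{U_j})<\varepsilon/4$ with the closure, not just $\mu(\tilde U_j\setminus\overline{U_j})<\varepsilon/4$; your thickening does deliver this since $\overline{\tilde U_j}\subseteq O$, so this is a matter of stating the right estimate rather than a gap.
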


\begin{proof}
Take an integer $M\ge C$ such that $\frac{1}{M}\mu(X\setminus (A_0\cup A_1))<\frac{1}{2}\varepsilon$.
Since $\mu$ is atomless and $X$ is metrizable, $(X,\mu )$ is measure-isomorphic to the unit interval with Lebesgue measure
\cite[Theorem~A.20]{KerLi16}
and so we can find a Borel partition $\sP=\{P_1, \dots, P_M\}$ of $X\setminus (A_0\cup A_1)$ such that $\mu(P_j)=\frac{1}{M}\mu(X\setminus (A_0\cup A_1))$ for every $j=1, \dots, M$. For each $1\le j\le M$, take a compact subset $Y_j$ of $P_j$ such that $\mu(P_j\setminus Y_j)<\varepsilon/(2M)$.

By condition (2) in Proposition~\ref{P-small separation}, we can find an open set $U_1$ such that $A_0\cup Y_1\subseteq U_1$, $\overline{U_1}$ is disjoint from $A_1\cup \bigcup_{k=2}^MY_k$, and $\partial U_1$ is $\sN$-small.
Again using condition (2) in Proposition~\ref{P-small separation} we can find an open set $U_2$ such that $\overline{U_1}\cup Y_2\subseteq U_2$, $\overline{U_2}$ is disjoint from $A_1\cup \bigcup_{k=3}^MY_k$, and $\partial U_2$ is $\sN$-small.
Continuing in this way, we produce open sets $U_1, \dots, U_{M-1}$ of $X$ such that $\overline{U_{j-1}}\cup Y_j\subseteq U_j$, $\overline{U_j}$ is disjoint from $A_1\cup \bigcup_{k=j+1}^MY_k$, and $\partial U_j$ is $\sN$-small for all $2\le j\le M-1$.
Set $U_M=X\setminus A_1$. Then conditions (1) to (4) in the lemma statement hold.

Since the sets $U_1\setminus A_0$ and $Y_k$ for $k\in \{2, \dots, M\}$ are pairwise disjoint subsets of 
$X\setminus (A_0\cup A_1)$, we have
\begin{align*}
\mu(U_1\setminus A_0)&\le \mu(X\setminus (A_0\cup A_1))-\sum_{2\le k\le M}\mu(Y_k)\\
&=\mu(X\setminus (A_0\cup A_1))-\sum_{2\le k\le M}\mu(P_k)+\sum_{2\le k\le M}\mu(P_k\setminus Y_k)\\
&=\frac{1}{M}\mu(X\setminus (A_0\cup A_1))+\sum_{2\le k\le M}\mu(P_k\setminus Y_k)\\
&\le \frac{1}{2}\varepsilon+(M-1)\frac{\varepsilon}{2M}<\varepsilon.
\end{align*}
This establishes (5).

Let $1\le j\le M-1$. Then the sets $U_{j+1}\setminus \overline{U_j}$ and $Y_k$ for $k\in \{1, \dots, M\}\setminus \{j+1\}$ are pairwise disjoint subsets of $X\setminus (A_0\cup A_1)$, in which case
\begin{align*}
\mu(U_{j+1}\setminus \overline{U_j})&\le \mu(X\setminus (A_0\cup A_1))-\sum_{\substack{1\le k\le M\\ k\neq j+1}}\mu(Y_k)\\
&=\mu(X\setminus (A_0\cup A_1))-\sum_{\substack{1\le k\le M\\ k\neq j+1}}\mu(P_k)+\sum_{\substack{1\le k\le M\\ k\neq j+1}}\mu(P_k\setminus Y_k)\\
&=\frac{1}{M}\mu(X\setminus (A_0\cup A_1))+\sum_{\substack{1\le k\le M\\ k\neq j+1}}\mu(P_k\setminus Y_k)\\
&\le \frac{1}{2}\varepsilon+(M-1)\frac{\varepsilon}{2M}<\varepsilon.
\end{align*}
This establishes (6).
\end{proof}

\begin{lemma} \label{L-SBP to function2}
Suppose that $\sN$ consists of atomless measures and that $X$ has the $\sN$-SBP. Let $A_0$ and $A_1$ be disjoint closed subsets of $X$, and let $\varepsilon, C>0$. Then there are a positive integer $M\ge C$ and open sets $U_1, \dots, U_M \subseteq X$ satisfying the following conditions:
\begin{enumerate}
\item $A_0\subseteq U_1$,
\item $\overline{U_j}\subseteq U_{j+1}$ for all $j=1, \dots, M-1$,
\item $U_M=X\setminus A_1$,
\item $\partial U_j$ is $\sN$-small for all $j=1, \dots, M-1$,
\item $\mu(U_1\setminus A_0)<\varepsilon$ for all $\mu\in \sN$,
\item $\mu(U_{j+1}\setminus \overline{U_j})<\varepsilon$ for all $\mu\in \sN$ and $j=1, \dots, M-1$.
\end{enumerate}
\end{lemma}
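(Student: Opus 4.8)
The plan is to obtain Lemma~\ref{L-SBP to function2} as a uniform-over-$\sN$ strengthening of Lemma~\ref{L-SBP to function1}, with weak$^*$ compactness of $\sN$ doing the work of passing from one measure to all of them at once. The point to exploit is that the tower $U_1,\dots,U_M$ produced in Lemma~\ref{L-SBP to function1} depends on the measure only through two ingredients: a partition $\{P_1,\dots,P_M\}$ of $X\setminus(A_0\cup A_1)$ into pieces of small measure, and compact sets $Y_j\subseteq P_j$ capturing most of the mass of each piece. The separation steps that actually build the $U_j$ (and make each $\partial U_j$ $\sN$-small) use only Proposition~\ref{P-small separation} and are insensitive to the measure. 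So I would keep that construction intact and replace its two measure-theoretic inputs by versions that are good simultaneously for every $\mu\in\sN$.

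First I would establish a uniform nonatomicity statement: for every $\delta>0$ there is an $r>0$ with $\sup_{\mu\in\sN}\mu(\overline{B(x,r)})<\delta$ for all $x\in X$. For a single $\mu$ this follows from atomlessness together with compactness of $X$ (a finite-subcover argument yields one radius good for all $x$); to make it uniform over $\sN$ I would, for each $\mu\in\sN$, first fix such a radius $r_\mu$, then invoke upper semicontinuity of the finitely many functions $\nu\mapsto\nu(\overline{B(x_i,r_\mu)})$, where $\{x_i\}$ is an $r_\mu/2$-net of $X$, to obtain a weak$^*$ neighbourhood of $\mu$ on which $r_\mu/2$ still works, and finally take a finite subcover of the compact set $\sN$. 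Partitioning $X$ into finitely many Borel pieces of diameter less than $r$ then gives $\{P_1,\dots,P_M\}$ with $\sup_{\mu\in\sN}\mu(P_j)$ as small as I like. For the second ingredient I would want compact $Y_j\subseteq P_j$ with $\sup_{\mu\in\sN}\mu(P_j\setminus Y_j)$ small, a uniform inner-regularity statement over $\sN$; here I would build the $P_j$ out of open sets with $\sN$-small boundary (available by the $\sN$-SBP), so that $\mu(\overline{P_j})=\mu(P_j)$ for $\mu\in\sN$ and the relevant set differences have upper semicontinuous mass, and then run the same finite-subcover argument. With these two uniform inputs in hand, rerunning the construction of Lemma~\ref{L-SBP to function1} verbatim makes its conditions (5) and (6) hold for all $\mu\in\sN$ at once, since those estimates bound the gap masses by $\mu(P_{j+1})+\sum_{k\neq j+1}\mu(P_k\setminus Y_k)$.

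The main obstacle is precisely this uniform regularity: producing a single tower for which $\mu(U_1\setminus A_0)$ and each $\mu(U_{j+1}\setminus\overline{U_j})$ are small for \emph{all} $\mu\in\sN$ simultaneously, rather than one measure at a time. The difficulty is that, a priori, distinct measures in $\sN$ can distribute their mass very differently across the levels of a fixed tower, so no naive averaging over finitely many representatives will control the remaining measures, and the weak$^*$ moduli one would use to extend an estimate from a finite net to all of $\sN$ themselves depend on the tower being built. What I expect to rescue the argument is that \emph{every} measure in $\sN$ is atomless and that $\sN$ is closed, hence weak$^*$ compact: a sequence in $\sN$ concentrating its mass near a set thin enough to defeat the estimates would force its limit---still in $\sN$---to carry an atom, a contradiction. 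I would therefore anticipate that the crux of the write-up is a careful weak$^*$-compactness and portmanteau argument converting the per-measure regularity underlying Lemma~\ref{L-SBP to function1} into the uniform regularity needed here, with the $\sN$-SBP ensuring throughout that all the boundaries produced remain $\sN$-small.
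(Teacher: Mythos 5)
Your route is genuinely different from the paper's. The paper applies Lemma~\ref{L-SBP to function1} once for each $\mu$ in a finite subcover of $\sN$, uses portmanteau continuity of $\nu\mapsto\nu(V)$ for the ``gap'' sets $V$ of each tower to choose that subcover, forms the join of the finitely many gap collections, and then \emph{reassembles} a single tower from the join pieces, inserting small collar sets around the accumulated boundaries via Lemma~\ref{L-small separation}. You instead keep the construction of Lemma~\ref{L-SBP to function1} and make its two measure-dependent inputs --- the partition $\{P_j\}$ and the compact cores $Y_j$ --- uniform over $\sN$ before running it once. Your uniform nonatomicity statement is correct, and your finite-subcover argument for it (upper semicontinuity of $\nu\mapsto\nu(K)$ for closed $K$, plus compactness of $\sN$) is sound.

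The gap is in the uniform inner regularity step, in two respects. First, a repairable one: $\nu\mapsto\nu(P_j\setminus Y_j)$ is not upper semicontinuous as you assert ($P_j\setminus Y_j$ is neither open nor closed; if $P_j$ is open with $\sN$-small boundary the function is in fact lower semicontinuous, the wrong direction). The fix is to take $Y_j=\{x\in P_j: d(x,P_j^{\rm c})\ge s\}$ and dominate $P_j\setminus Y_j$ by the closed set $\overline{P_j}\cap\{d(\cdot,P_j^{\rm c})\le s\}$, which decreases as $s\downarrow 0$ to $\partial P_j$; this works exactly when $\partial P_j$ is $\sN$-small, so the boundaries must be tracked. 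Second, a genuine obstruction: the $P_j$ must partition $X\setminus(A_0\cup A_1)$, so their boundaries contain portions of $\partial(A_0\cup A_1)$, which is \emph{not} assumed $\sN$-small, and there uniform inner regularity can fail. Take $X=[0,1]^2$, let $\mu_n$ be normalized arc length on $\{1/n\}\times[0,1]$, let $\sN$ be the weak$^*$ closure of $\{\mu_n\}$ (all atomless, and $X$ has the $\sN$-SBP), and let $A_0=\{0\}\times[0,1]$: every compact subset of $X\setminus A_0$ is $\mu_n$-null for large $n$ while $\mu_n(X\setminus A_0)=1$. Your proposed rescue via atomlessness does not apply here, because mass concentrating on a thin closed set need not produce an atom in the limit.

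You should know that this obstruction is intrinsic to the statement for arbitrary disjoint closed $A_0,A_1$ rather than a defect of your strategy alone: in the example above any open $U_1\supseteq A_0$ contains a tube $[0,\delta)\times[0,1]$, so $\mu_n(U_1\setminus A_0)=1$ for $n>1/\delta$ and condition (5) is unachievable; correspondingly, the paper's own proof relies at this point on the continuity of $\nu\mapsto\nu(U_{\mu,1}\setminus A_0)=\nu(U_{\mu,1})-\nu(A_0)$, which requires $\nu\mapsto\nu(A_0)$ to be continuous on $\sN$ and not merely upper semicontinuous. The lemma should therefore carry the additional hypothesis that $\partial A_0$ and $\partial A_1$ are $\sN$-small (satisfied where the lemma is used for Theorem~\ref{T-main}, namely $A_0=A_1=\emptyset$, and in the recursive steps of Proposition~\ref{P-SBP to function}, where the closed sets are $\overline{U_{r_0}}$ and $X\setminus U_{r_1}$). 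Under that hypothesis, and with the semicontinuity repair above, your argument closes up and yields a correct alternative proof.
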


\begin{proof} 
We may assume that $\sN\neq \emptyset$, for otherwise the statement is trivial.

Let $\mu\in \sN$. By Lemma~\ref{L-SBP to function1} there are a positive integer $M_{\mu}\ge C+2$ and open sets $U_{\mu, 1}, \dots, U_{\mu, M_\mu}\subseteq X$ satisfying the following conditions:
\begin{enumerate}[label=(\roman*)]
\item $A_0\subseteq U_{\mu, 1}$,
\item $\overline{U_{\mu, j}}\subseteq U_{\mu, j+1}$ for all $j=1, \dots, M_\mu-1$,
\item $U_{M_\mu}=X\setminus A_1$,
\item $\partial U_{\mu, j}$ is $\sN$-small for all $j=1, \dots, M_\mu-1$,
\item $\mu(U_{\mu, 1}\setminus A_0)<\varepsilon/2$,
\item $\mu(U_{\mu, j+1}\setminus \overline{U_{\mu, j}})<\varepsilon/2$ for all  $j=1, \dots, M_\mu-1$.
\end{enumerate}
Consider the pairwise disjoint collection
\begin{gather*}
\sV_\mu :=\{U_{\mu, 1}\setminus A_0\}\cup \{U_{\mu, j+1}\setminus \overline{U_{\mu, j}}: j=1, \dots, M_\mu-1\}
\end{gather*}
of open subsets of $X\setminus (A_0\cup A_1)$. By (iv) the set
\begin{gather*}
(X\setminus (A_0\cup A_1))\setminus \bigcup \sV_\mu=\bigcup_{j=1}^{M_\mu-1}\partial U_{\mu, j}
\end{gather*}
is $\sN$-small, and by (v) and (vi) we have $\mu(V)<\varepsilon/2$ for all $V\in \sV_\mu$.
By (iv) and the portmanteau theorem, 
the functions sending $\nu$ to $\nu(U_{\mu, j})$ for $j=1, \dots, M_\mu-1$ and $\nu(X\setminus \overline{U_{\mu, M_\mu-1}})$ are continuous on $\sN$.
It follows that for each $V\in \sV_\mu$ the function on $\sN$ sending $\nu$ to $\nu(V)$ is continuous.
We can therefore find an open neighbourhood $O_\mu$ of $\mu$ in $\sN$ such that
\begin{gather*}
\nu(V)<\frac{\varepsilon}{2}
\end{gather*}
for all $\nu\in O_\mu$ and $V\in \sV_\mu$. Enumerate the elements of $\sV_\mu$ as $V_{\mu, 1}, \dots, V_{\mu, M_\mu}$ 
with $V_{\mu, 1}=U_{\mu, 1}\setminus A_0$ and $V_{\mu, M_\mu}=U_{\mu, M_\mu}\setminus \overline{U_{\mu, M_\mu-1}}$. Then $V_{\mu, 1}\cup A_0=U_{\mu, 1}$ and $V_{\mu, M_\mu}\cup A_1=X\setminus \overline{U_{\mu, M_\mu-1}}$ are open.

Since $\sN$ is compact, we can find a finite subset $W$ of $\sN$ such that the sets $O_\mu$ for $\mu\in W$ cover $\sN$.
Set $M=\prod_{\mu\in W}M_\mu\ge C+2$. Then $M\ge 3$. Denote by $\sV$ the join of the sets $\sV_\mu$ for $\mu\in W$, i.e., the family of sets of the form $\bigcap_{\mu\in W}V_\mu$ with $V_\mu\in \sV_\mu$ for all $\mu\in W$. Then $\sV$ consists of $M$ pairwise disjoint open subsets of $X\setminus (A_0\cup A_1)$, some of which may be empty.
Note that the set
\begin{gather*}
(X\setminus (A_0\cup A_1))\setminus \bigcup \sV=\bigcup_{\mu\in W}\Big((X\setminus (A_0\cup A_1))\setminus \bigcup \sV_\mu\Big)
\end{gather*}
is $\sN$-small. Furthermore,
\begin{gather*}
\nu(V)<\frac{\varepsilon}{2}
\end{gather*}
for all $V\in \sV$ and $\nu\in \sN$. Enumerate the elements of $\sV$ as $V_1, \dots, V_M$ with
\begin{gather*}
V_1=\bigcap_{\mu\in W}V_{\mu, 1} \hspace*{4mm}
\text{and}\hspace*{4mm}
V_M=\bigcap_{\mu\in W}V_{\mu, M_\mu}.
\end{gather*}
Then $V_1\cup A_0=\bigcap_{\mu\in W}(V_{\mu, 1}\cup A_0)$ and $V_M\cup A_1=\bigcap_{\mu\in W}(V_{\mu, M_\mu}\cup A_1)$ are open. For each $1<j<M$ the boundary $\partial V_j$ is a subset of $(X\setminus (A_0\cup A_1))\setminus \bigcup \sV$ and hence is $\sN$-small.
Set $U_1=V_1\cup A_0$. Since $V_1\cup A_0$ and $V_M\cup A_1$ are disjoint open sets, $\partial U_1$ is a subset of $(X\setminus (A_0\cup A_1))\setminus \bigcup \sV$ and hence is $\sN$-small.

By Lemma~\ref{L-small separation} we can find an open set $R_1 \subseteq X$ such that $\partial U_1\subseteq R_1\subseteq \overline{R_1}\subseteq X\setminus (A_0\cup A_1)$, $\partial R_1$ is $\sN$-small, and $\mu(R_1)<\varepsilon/2$ for all $\mu\in \sN$. Set $U_2=U_1\cup R_1\cup V_2$. Then $\overline{U_2}=\overline{U_1}\cup \overline{R_1}\cup \overline{V_2}$ is disjoint from $A_1$, and $\overline{U_1}\subseteq U_2$. Note that $\partial U_2\subseteq \partial U_1\cup \partial R_1\cup \partial V_2$ is $\sN$-small. For each $\mu\in \sN$, we have
\begin{gather*}
\mu(U_2\setminus \overline{U_1})\le \mu(R_1\cup V_2)\le \mu(R_1)+\mu(V_2)<\frac{\varepsilon}{2}+\frac{\varepsilon}{2}=\varepsilon.
\end{gather*}

If $M=3$, then setting $U_3=X\setminus A_1$ we have $\mu(U_3 \setminus \overline{U_2})\le \mu(V_3)<\varepsilon/2$ for all $\mu\in \sN$, 
in which case (1) to (6) hold. Thus we may assume that $M>3$.

By Lemma~\ref{L-small separation} we can find an open set $R_2$ of $X$ such that $\partial U_2\subseteq R_2\subseteq \overline{R_2}\subseteq X\setminus (A_0\cup A_1)$, $\partial R_2$ is $\sN$-small, and $\mu(R_2)<\varepsilon/2$ for all $\mu\in \sN$. Set $U_3=U_2\cup R_2\cup V_3$. Then $\overline{U_3}=\overline{U_2}\cup \overline{R_2}\cup \overline{V_3}$ is disjoint from $A_1$, and $\overline{U_2}\subseteq U_3$. Note that $\partial U_3\subseteq \partial U_2\cup \partial R_2\cup \partial V_3$ is $\sN$-small. For each $\mu\in \sN$, we have
\begin{gather*}
\mu(U_3\setminus \overline{U_2})\le \mu(R_2\cup V_3)\le \mu(R_2)+\mu(V_3)<\frac{\varepsilon}{2}+\frac{\varepsilon}{2}=\varepsilon.
\end{gather*}

Continuing in this way, we construct open sets $U_2, \dots, U_{M-1}$ such that (4) holds and $\overline{U_j}\cup V_{j+1}\subseteq U_{j+1}$ and $\mu(U_{j+1}\setminus \overline{U_j})<\varepsilon$ for all $j=1, \dots, M-2$ and $\mu\in \sN$, and $\overline{U_{M-1}}$ is disjoint from $A_1$. Set $U_M=X\setminus A_1$. Then $\overline{U_{M-1}}\subseteq U_M$, and $\mu(U_M\setminus \overline{U_{M-1}})\le \mu(V_M)<\varepsilon/2$ for all $\mu\in \sN$. Therefore (1) to (6) hold.
\end{proof}

\begin{proposition} \label{P-SBP to function}
Suppose that $\sN$ consists of atomless measures and $X$ has the $\sN$-SBP. Let $A_0$ and $A_1$ be disjoint closed subsets of $X$. Then there is a continuous function $f: X\rightarrow [0, 1]$ such that $f=t$ on $A_t$ for $t\in \{0, 1\}$ and $f^{-1}(t)\setminus (A_0\cup A_1)$ is $\sN$-small for all $t\in [0, 1]$.
\end{proposition}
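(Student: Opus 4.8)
My plan is to prove this refined Urysohn statement by the classical interpolation scheme, using Lemma~\ref{L-SBP to function2} as the engine that keeps all the level sets null \emph{uniformly} over $\sN$. Concretely, I will construct open sets $U_r$ indexed by a countable set $D\subseteq(0,1)$ that is dense in $[0,1]$, such that $A_0\subseteq U_r$ and $\overline{U_r}\cap A_1=\emptyset$ for every $r$, such that $\overline{U_r}\subseteq U_s$ whenever $r<s$, and such that each $\partial U_r$ is $\sN$-small; the function will then be the usual $f(x)=\inf\{r\in D:x\in U_r\}$ (with $\inf\emptyset=1$). Continuity of $f$ and the identities $f|_{A_0}=0$, $f|_{A_1}=1$ are exactly the classical Urysohn conclusions drawn from the nesting $\overline{U_r}\subseteq U_s$ and the density of $D$ in $[0,1]$, so the entire content lies in arranging the measures of the $U_r$ so that every fibre is $\sN$-small.

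I build the family in stages $D_1\subseteq D_2\subseteq\cdots$ with $D=\bigcup_n D_n$ and a null sequence $\eps_n\downarrow 0$. The first stage is a single application of Lemma~\ref{L-SBP to function2} to $A_0,A_1$ with tolerance $\eps_1$, producing a finite chain from $A_0$ to $X\setminus A_1$ with $\sN$-small boundaries, whose members I assign strictly increasing values in $(0,1)$. At stage $n+1$ I refine every ``gap'' of the stage-$n$ chain: between consecutive sets $\overline{U_r}\subseteq U_s$ I apply Lemma~\ref{L-SBP to function2} to the disjoint closed pair $\overline{U_r}$, $X\setminus U_s$ with tolerance $\eps_{n+1}$, and I likewise refine the two end gaps by applying the lemma to $A_0,\,X\setminus U_{r_0}$ (where $r_0=\min D_n$) and to $\overline{U_{r_1}},\,A_1$ (where $r_1=\max D_n$). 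This inserts finitely many new open sets with $\sN$-small boundaries, to which I assign values strictly inside the corresponding gap-interval so that the index set stays order-isomorphic to the nesting and $D$ ends up dense in $[0,1]$. Crucially, the lemma guarantees that \emph{for all} $\mu\in\sN$ the newly created increments satisfy $\mu(U_s\setminus\overline{U_r})<\eps_{n+1}$, and since $\partial U_r$ is $\sN$-small we get $\mu(U_s\setminus U_r)<\eps_{n+1}$ for every $\mu\in\sN$ and every pair of stage-$(n+1)$ neighbours; the same estimate at the ends controls $\mu(U_{r_0}\setminus A_0)$ and $\mu((X\setminus A_1)\setminus U_{r_1})$.

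To see that the fibres are null I compute, for $t\in(0,1)$, that
\begin{gather*}
f^{-1}(t)=\Big(\bigcap_{r\in D,\,r>t}U_r\Big)\setminus\Big(\bigcup_{s\in D,\,s<t}U_s\Big),
\end{gather*}
so that for every $\mu\in\sN$ the value $\mu(f^{-1}(t))$ equals the jump $\lim_{r\downarrow t}\mu(U_r)-\lim_{s\uparrow t}\mu(U_s)$ of the nondecreasing function $r\mapsto\mu(U_r)$ at $t$. Fixing $n$ large enough that $D_n$ has indices on both sides of $t$ and taking the stage-$n$ neighbours $s<t<r$ straddling $t$, this jump is at most $\mu(U_r\setminus U_s)<2\eps_n$ (at most two neighbour steps), whence $\mu(f^{-1}(t))=0$; since $t\in(0,1)$ the fibre already misses $A_0\cup A_1$. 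For $t=0$ one has $f^{-1}(0)=\bigcap_{r\in D}U_r$, so $\mu(f^{-1}(0)\setminus A_0)=\lim_{r\downarrow 0}\mu(U_r\setminus A_0)$, which vanishes by the end-gap estimate $\mu(U_{r_0}\setminus A_0)<\eps_n$ as $n\to\infty$; the case $t=1$ is symmetric. Thus $f^{-1}(t)\setminus(A_0\cup A_1)$ is $\sN$-small for every $t\in[0,1]$.

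The only real obstacle is the requirement that the increments be small \emph{simultaneously for all} $\mu\in\sN$ rather than for one measure at a time: a single monotone function $r\mapsto\mu(U_r)$ automatically has at most countably many jumps, but here we must annihilate the jump at \emph{every} $t$ for \emph{every} measure in a possibly uncountable family, which is precisely what the uniformity in Lemma~\ref{L-SBP to function2} delivers. Given that lemma, the remaining steps are the routine bookkeeping of the stagewise refinement and the standard Urysohn continuity argument.
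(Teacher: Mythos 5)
Your proposal is correct and follows essentially the same route as the paper: iterate Lemma~\ref{L-SBP to function2} to refine a nested chain of open sets with $\sN$-small boundaries and increments that are uniformly small over $\sN$, then take the classical Urysohn function and bound each fibre by the measure of the gap between neighbouring sets of a late stage. The only differences are cosmetic bookkeeping (defining $f$ via an infimum with the $\inf\emptyset=1$ convention rather than including $U_0=A_0$ and $U_1=X\setminus A_1$ as endpoints of every gap, and verifying $\sup Q_{<x}=\inf Q_{>x}$ via disjointness of the boundaries as the paper does).
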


\begin{proof} 
We will construct a sequence $\{Q_n\}_{n\in \Nb}$ of finite subsets of $(0, 1]$ and an open set $U_r \subseteq X$ for each $r\in Q:=\bigcup_{n\in \Nb}Q_n$ such that the following hold:
\begin{enumerate}
\item $1\in Q_n\subseteq Q_{n+1}$ for all $n$,
\item $\overline{U_r}\subseteq U_{r'}$ for all $r<r'$ in $Q$,
\item $A_0\subseteq U_r$ for all $r\in Q$ and $U_1=X\setminus A_1$,
\item $\partial U_r$ is $\sN$-small for each $r\in Q$,
\item setting $U_0=A_0$, for each $n$,  if we list the elements of $Q_n\cup \{0\}$ in increasing order, then for any consecutive $r<r'$ in $Q_n\cup \{0\}$ we have $r'-r\le 2^{-n}$ and $\mu(U_{r'}\setminus \overline{U_r})<2^{-n}$ for all $\mu\in \sN$.\end{enumerate}
Assume for now that we have found such $\{Q_n\}_{n\in \Nb}$ and $U_r$. From (1) and (5) we know that $Q$ is dense in $[0, 1]$. From (2) we know that the boundaries $\partial U_r$ for $r\in Q$ are pairwise disjoint.

Let $x\in X\setminus (A_0\cup A_1)$. If $x\in U_{r'}$ and $x\not\in \overline{U_r}$ for some $r, r'\in Q\cup \{0\}$, then by (2) and (3) we have $r<r'$. Set
\begin{align*}
Q_{>x}&=\{r\in Q\cup \{0\}: x\in U_r\} ,\\
Q_{<x}&=\{r\in Q\cup \{0\}: x\not\in \overline{U_r}\}.
\end{align*}
Then $1\in Q_{>x}$ and $0\in Q_{<x}$.
We claim that $\sup Q_{<x}=\inf Q_{>x}$. Suppose instead that $\sup Q_{<x}<\inf Q_{>x}$. For any $r\in Q$ satisfying $\sup Q_{<x}<r<\inf Q_{>x}$, we have
$x\in \overline{U_r}$ and $x\not\in U_r$, whence $x\in \partial U_r$. Since $Q$ is dense in $(0, 1)$, there are infinitely many $r\in Q$ satisfying $\sup Q_{<x}<r<\inf Q_{>x}$, and the corresponding $\partial U_r$ are disjoint. Thus we obtain a contradiction,
proving our claim.

Now we define $f: X\rightarrow [0, 1]$ by
\begin{align*}
f(x)=\begin{cases}
0 & \quad \text{if } x\in A_0 ,\\
\sup Q_{<x}=\inf Q_{>x} & \quad \text{if } x\not\in A_0\cup A_1 ,\\
1 & \quad \text{if } x\in A_1.\\
\end{cases}
\end{align*}
Then $f=t$ on $A_t$ for $t\in \{0, 1\}$.

Let $x\in X$ and let us check that $f$ is continuous at $x$.
Suppose first that $f(x)\in (0, 1)$. For every $\varepsilon>0$, since $Q$ is dense in $[0, 1]$ we can find $r, r'$ in $Q$ such that $f(x)-\varepsilon<r<f(x)<r'<f(x)+\varepsilon$. Then $x\in U_{r'}\setminus \overline{U_r}$. Note that $r\le f(y)\le r'$ for all $y\in U_{r'}\setminus \overline{U_r}$. Therefore $f$ is continuous at $x$.

Next suppose that $f(x)=1$. For every $\varepsilon>0$, since $Q$ is dense in $[0, 1]$ we can find an $r$ in $Q$ such that $1-\varepsilon<r<1$. Then $x\in X\setminus \overline{U_r}$. Note that $r\le f(y)$ for all $y\in X\setminus \overline{U_r}$. Therefore $f$ is continuous at $x$.

Finally, suppose that $f(x)=0$. For every $\varepsilon>0$, since $Q$ is dense in $[0, 1]$ we can find an $r$ in $Q$ such that $0<r<\varepsilon$. Then $x\in U_r$. Note that $f(y)\le r$ for all $y\in U_r$. Therefore $f$ is continuous at $x$.
In summary, we have shown that $f$ is continuous on all of $X$.

Let $t\in (0, 1)$. When $n$ is sufficiently large, by (5) we can find some consecutive $r_1<r_2<r_3$ in $Q_n$ such that $r_1<t<r_3$.  Then $f^{-1}(t)$ is disjoint from $\overline{U_{r_1}}$, and $f^{-1}(t)\subseteq U_{r_3}$. Thus for every $\mu\in \sN$ we have
\begin{align*}
\mu(\{ f^{-1}(t) \})
\le \mu(U_{r_3}\setminus \overline{U_{r_1}})
&=\mu(U_{r_3}\setminus \overline{U_{r_2}})+\mu(\partial U_{r_2})+\mu(U_{r_2}\setminus \overline{U_{r_1}}) \\
&<\frac{1}{2^n}+0+\frac{1}{2^n}=\frac{1}{2^{n-1}}.
\end{align*}
Letting $n\to \infty$, we see that $f^{-1}(t)$ is $\sN$-small.

For each $n$, denoting by $r$ the largest element in $Q_n\setminus \{1\}$ we have $f^{-1}(0)\setminus A_0\subseteq U_{\min Q_n}\setminus U_0$ and $f^{-1}(1)\setminus A_1\subseteq U_1\setminus \overline{U_r}$, so that by (5) we have, for each $\mu\in \sN$, 
\begin{gather*}
\mu(f^{-1}(0)\setminus A_0)\le \mu(U_{\min Q_n}\setminus U_0)<\frac{1}{2^n}
\end{gather*}
and
\begin{gather*}
\mu(f^{-1}(1)\setminus A_1)\le \mu(U_1\setminus \overline{U_r})<\frac{1}{2^n}.
\end{gather*}
Letting $n\to \infty$, we conclude that $f^{-1}(0)\setminus A_0$ and $f^{-1}(1)\setminus A_1$ are $\sN$-small.

Now we turn to the construction of the sequence $\{Q_n\}_{n\in \Nb}$ and the sets $U_r$ satisfying (1) to (5). Set $U_0=A_0$.

By Lemma~\ref{L-SBP to function2} we can find an integer $M\ge 2$ and open sets $U_r$ for 
$r\in Q_1 :=\{\frac{1}{M}, \frac{2}{M}, \dots, \frac{M}{M}\}$ such that the following hold:
\begin{enumerate}
\item[(i)] $\overline{U_{j/M}}\subseteq U_{(j+1)/M}$ for all $j=0, 1, \dots, M-1$,
\item[(ii)] $U_1=X\setminus A_1$,
\item[(iii)] $\partial U_{j/M}$ is $\sN$-small for all $j=1, \dots, M-1$,
\item[(iv)] $\mu(U_{(j+1)/M}\setminus \overline{U_{j/M}})<2^{-1}$ for all $\mu\in \sN$ and $j=0, 1, \dots, M-1$.
\end{enumerate}
This defines $Q_1$ and $U_r$ for $r\in \{0\}\cup Q_1$ satisfying (1) to (5).

Let $r_0<r_1$ be two consecutive elements in $\{0\}\cup Q_1$. By Lemma~\ref{L-SBP to function2} we can find an integer $M\ge 2$ and open sets $U_r$ for $r\in Q_{2, r_0} :=\{r_0+(r_1-r_0)/M, r_0+2(r_1-r_0)/M, \dots, r_0+M(r_1-r_0)/M\}$ such that the following hold:
\begin{enumerate}
\item[(i)] $\overline{U_{r_0+j(r_1-r_0)/M}}\subseteq U_{r_0+(j+1)(r_1-r_0)/M}$ for all $j=0, 1, \dots, M-1$,
\item[(ii)] $\partial U_{r_0+j(r_1-r_0)/M}$ is $\sN$-small for all $j=1, \dots, M-1$,
\item[(iii)] $\mu(U_{r_0+(j+1)(r_1-r_0)/M}\setminus \overline{U_{r_0+j(r_1-r_0)/M}})<2^{-2}$ for all $\mu\in \sN$ and $j=0, 1, \dots, M-1$.
\end{enumerate}
Note that $U_{r_0+M(r_1-r_0)/M}=U_{r_1}$ was chosen before already.

Now set $Q_2=\bigcup_{r\in (\{0\}\cup Q_1)\setminus \{1\}}Q_{2, r}$. This defines $Q_2$ and $U_r$ for $r\in \{0\}\cup Q_2$ satisfying (1) to (5).

Let $r_0<r_1$ be two consecutive elements in $\{0\}\cup Q_2$. By Lemma~\ref{L-SBP to function2} we can find an integer $M\ge 2$ and open sets $U_r$ for $r\in Q_{3, r_0}:=\{r_0+(r_1-r_0)/M, r_0+2(r_1-r_0)/M, \dots, r_0+M(r_1-r_0)/M\}$ such that the following hold:
\begin{enumerate}
\item[(i)] $\overline{U_{r_0+j(r_1-r_0)/M}}\subseteq U_{r_0+(j+1)(r_1-r_0)/M}$ for all $j=0, 1, \dots, M-1$,
\item[(ii)] $\partial U_{r_0+j(r_1-r_0)/M}$ is $\sN$-small for all $j=1, \dots, M-1$,
\item[(iii)] $\mu(U_{r_0+(j+1)(r_1-r_0)/M}\setminus \overline{U_{r_0+j(r_1-r_0)/M}})<2^{-3}$ for all $\mu\in \sN$ and $j=0, 1, \dots, M-1$.
\end{enumerate}
Note that $U_{r_0+M(r_1-r_0)/M}=U_{r_1}$ was chosen before already.

Now set $Q_3=\bigcup_{r\in (\{0\}\cup Q_2)\setminus \{1\}}Q_{3, r}$. This defines $Q_3$ and $U_r$ for $r\in \{0\}\cup Q_3$ satisfying (1) to (5).

Continuing in this way, we find $\{Q_n\}_{n\in \Nb}$ and $U_r$ satisfying (1) to (5).
\end{proof}

\section{Product actions and the proof of Theorem~\ref{T-main}} \label{S-product actions}

Let $G$ and $H$ be countable groups. The product $G\times H\curvearrowright X\times Y$
of two actions $G\curvearrowright X$ and $H\curvearrowright Y$ is defined by $(g,h)(x,y) = (gx,hy)$. 

Let $G\curvearrowright X$ be a continuous action on a compact metrizable space.
Write $\sM_G (X)$ for the space of $G$-invariant Borel probability measures on $X$, which is a weak$^*$ closed
subset of the space $\sM (X)$ of all Borel probability measures on $X$.
The action $G\curvearrowright X$ is said to have 
the {\it SBP} if $X$ has the $\sM_G(X)$-SBP in the sense of Definition~\ref{D-small}.

The next lemma makes use of the well-known comparison property for ergodic
probability-measure-preserving actions $G\curvearrowright (X,\mu )$, which says that for any two 
sets $A,B\subseteq X$ of equal measure there exists an element $T$ of the full group 
(i.e., a measurable automorphism $T$ of $X$ for which there are a countable measurable partition 
$\{ A_i \}_{i\in I}$ of $X$ modulo null sets and group elements $s_i$ for $i\in I$ with $Tx = s_i x$
for all $i\in I$ and $x\in A_i$) such that $TA = B$ modulo null sets (see for example Lemma~7.10 of \cite{KecMil04}).
This is easily deduced using a greedy algorithm with respect to some fixed enumeration of $G$.

\begin{lemma}\label{L-ergodic for product}
Let $G\curvearrowright X$ and $H\curvearrowright Y$ be continuous actions on compact metrizable spaces. Then every ergodic measure $\mu \in \sM_{G\times H} (X\times Y)$ for the product action $G\times H\curvearrowright X\times Y$ is of the form $\mu_X\times \mu_Y$ for some $\mu_X\in \sM_G(X)$ and $\mu_Y\in \sM_H(Y)$.
\end{lemma}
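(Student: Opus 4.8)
The plan is to show that $\mu$ agrees with the product of its two marginals on all measurable rectangles, which suffices since such rectangles form a generating $\pi$-system for the Borel $\sigma$-algebra of $X\times Y$. Write $\pi_X$ and $\pi_Y$ for the coordinate projections and set $\mu_X=(\pi_X)_*\mu$ and $\mu_Y=(\pi_Y)_*\mu$. Since $\pi_X$ intertwines the product action (through the quotient $G\times H\to G$) with $G\curvearrowright X$, the marginal $\mu_X$ lies in $\sM_G(X)$, and likewise $\mu_Y\in\sM_H(Y)$. I would first record that these marginals are in fact ergodic: if $A\subseteq X$ is $G$-invariant then $A\times Y$ is $(G\times H)$-invariant, so $\mu_X(A)=\mu(A\times Y)\in\{0,1\}$ by ergodicity of $\mu$; thus $\mu_X$ is $G$-ergodic and symmetrically $\mu_Y$ is $H$-ergodic. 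The goal becomes $\mu(A\times B)=\mu_X(A)\mu_Y(B)$ for all Borel $A\subseteq X$ and $B\subseteq Y$.

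The crucial step, and the point where the comparison property enters, is to show that for a fixed Borel set $B\subseteq Y$ the quantity $\mu(A\times B)$ depends only on $\mu_X(A)$. So let $A,A'\subseteq X$ satisfy $\mu_X(A)=\mu_X(A')$. Applying the comparison property to the ergodic system $G\curvearrowright(X,\mu_X)$ yields an element $T$ of its full group, say with $Tx=s_ix$ on the pieces $A_i$ of a countable partition of $X$, such that $TA=A'$ modulo $\mu_X$-null sets. I then lift $T$ to the transformation $\tilde T$ of $X\times Y$ given by $\tilde T(x,y)=(s_ix,y)$ for $x\in A_i$. Because each $(s_i,e)$ preserves $\mu$ and the sets $A_i\times Y$ partition $X\times Y$ (with images $(s_iA_i)\times Y$ again partitioning it modulo $\mu$-null sets), $\tilde T$ is an element of the full group of $G\times\{e\}\curvearrowright(X\times Y,\mu)$ and hence preserves $\mu$. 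Since $\tilde T(A\times B)=(TA)\times B=A'\times B$ modulo null sets, we get $\mu(A\times B)=\mu(A'\times B)$, as desired.

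It remains to convert this invariance into the product formula. Fixing $B$, the measure $A\mapsto\mu(A\times B)$ is dominated by $\mu_X$, so it has a density $g=g_B$ with values in $[0,1]$, and the previous paragraph says exactly that $\int_A g\,d\mu_X=\int_{A'}g\,d\mu_X$ whenever $\mu_X(A)=\mu_X(A')$. I claim this forces $g$ to be $\mu_X$-a.e.\ constant, necessarily equal to its integral $\mu(X\times B)=\mu_Y(B)$, which gives $\mu(A\times B)=\mu_Y(B)\mu_X(A)$ and completes the proof. To see the constancy I would use that an ergodic measure is either atomless or uniformly supported on a single finite orbit. In the atomless case, if $g$ were not a.e.\ constant one could choose thresholds $c_1<c_2$ and equal-measure subsets of $\{g\le c_1\}$ and $\{g\ge c_2\}$ with strictly different $g$-integrals, a contradiction; in the finite-orbit case invariance of $\mu_X$ makes all atoms have equal mass, so singletons are equal-measure sets and $g$ must take the same value on each, again forcing constancy.

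The main obstacle is the middle step: recognizing that the comparison property is the right device and checking that the full-group element $T$ on $(X,\mu_X)$ lifts to a genuinely $\mu$-preserving full-group element of the $G\times\{e\}$-action that carries $A\times B$ to $A'\times B$. Once the dependence of $\mu(A\times B)$ on $A$ only through $\mu_X(A)$ is in hand, the remaining passage to the product formula is routine. Alternatively, one could disintegrate $\mu$ over $\mu_X$, establish the conditional equivariance relation $\lambda_{gx}=h_*\lambda_x$, and deduce from ergodicity that the fiber measures $\lambda_x$ are a.e.\ constant; the comparison-based route has the advantage of avoiding any appeal to disintegration.
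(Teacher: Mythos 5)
Your proof is correct and is essentially the paper's own argument: both hinge on applying the comparison property to the ergodic marginal $G\curvearrowright (X,\mu_X)$ and lifting the resulting full group elements to $X\times Y$ via $S\mapsto S\times\id_Y$ so as to conclude that $\mu(A\times B)$ depends on $A$ only through $\mu_X(A)$. The only divergence is the endgame: the paper realizes $A$ as $k$ pieces of an $n$-piece equal-measure partition to handle rational $\mu_X(A)$ and then approximates, whereas you express the same invariance as the $\mu_X$-a.e.\ constancy of the Radon--Nikodym density of $A\mapsto\mu(A\times B)$ with respect to $\mu_X$; both are valid, and yours has the minor advantage of absorbing the finite-orbit (atomic) case into the same argument instead of treating it separately.
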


\begin{proof}
Let $\mu$ be an ergodic measure in $\sM_{G\times H} (X\times Y)$ and write $\mu_X$ and $\mu_Y$
for the marginals, which are invariant and ergodic for the actions of $G$ and $H$, respectively.
If neither $\mu_X$ and $\mu_Y$ are atomless then by ergodicity they are each supported 
and uniform on a single finite orbit, which means that $\mu$ must be supported 
and uniform on the product of these finite orbits and therefore be equal to $\mu_X \times \mu_Y$.
We may thus assume that one of $\mu_X$ and $\mu_Y$ is atomless, say $\mu_X$ without loss of generality.

Let $A\subseteq X$ and $B\subseteq Y$ be Borel sets. 
Suppose first that $\mu_X (A)$ is rational, say $k/n$. 
The fact that $\mu$ is atomless and $X$ is metrizable implies that $(X,\mu )$ is measure-isomorphic 
to the unit interval with Lebesgue measure
\cite[Theorem~A.20]{KerLi16} and so we can find a $\mu_X$-uniform Borel partition $\{ P_1 , \dots ,  P_n \}$ of $X$
such that $A = \bigsqcup_{i=1}^k P_i$.
Since $G\curvearrowright X$ is $\mu_X$-ergodic, by the comparison property 
recalled before Lemma~\ref{L-ergodic for product}
there exist elements $S_2 , \dots , S_n$ in the full group
such that, modulo $\mu_X$-null sets, we have $P_i = S_i P_1$ for every $i=2,\dots , n$. 
For $i=2,\dots ,n$ the product transformation $T_i := S_i \times \id_Y$
of $X\times Y$, defined modulo a $\mu$-null set, belongs to the full group of the $G\times H$ action, 
and so all of the sets
$P_i \times B$ have the same $\mu$-measure. Since these sets partition $X\times B$, which has $\mu$-measure
$\mu_Y (B)$, we therefore have $\mu (P_i \times B) = \mu_Y (B) / n$ for every $i$. 
It follows that
\begin{align*}
\mu (A\times B) 
= \sum_{i=1}^k \mu (P_i \times B) 
= \sum_{i=1}^k \frac1n \mu_Y (B) 
= \frac{k}{n} \mu_Y (B) 
= \mu_X (A) \mu_Y (B) .
\end{align*}

In the case that $\mu_X (A)$ is not rational we can find, by the atomlessness of $\mu_X$, an increasing
sequence $A_1 \subseteq A_2 \subseteq\dots$ of subsets of $A$ with rational $\mu_X$-measure such that 
$A = \bigcup_{j=1}^\infty A_j$,
in which case $\mu_X (A) = \lim_{j\to\infty} \mu_X (A_j )$ and hence, using the previous paragraph,
\begin{align*}
\mu (A\times B) 
= \lim_{j\to\infty} \mu (A_j \times B) 
= \lim_{j\to\infty} \mu_X (A_j ) \mu_Y (B) 
= \mu_X (A) \mu_Y (B) .
\end{align*}
Since Borel rectangles generate the Borel $\sigma$-algebra of $X\times Y$, we conclude that $\mu$ 
is equal to the product measure $\mu_X \times \mu_Y$.
\end{proof}

\begin{lemma} \label{L-SBP to nonatomic}
Let $G\curvearrowright X$ and $H\curvearrowright Y$ be continuous actions on compact metrizable spaces. Suppose that the product action $G\times H\curvearrowright X\times Y$ has the SBP. Then either $H\curvearrowright Y$ has the SBP or $\sM_G(X)$ consists of atomless measures.
\end{lemma}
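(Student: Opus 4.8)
The plan is to argue contrapositively: assuming that $\sM_G(X)$ contains a measure that is not atomless, I will deduce that $H\curvearrowright Y$ has the SBP by verifying condition (3) of Proposition~\ref{P-small separation} for the pair $(Y,\sM_H(Y))$. The starting observation is that an atom forces a finite orbit. If $\mu_X\in \sM_G(X)$ has an atom at some point $x_0$, then by $G$-invariance every point of the orbit $Gx_0$ carries the same positive mass, so $Gx_0$ must be finite, say $Gx_0=\{x_0,\dots,x_{m-1}\}$, and the uniform measure $\mu_F=\frac1m\sum_{i}\delta_{x_i}$ lies in $\sM_G(X)$. The decisive consequence is that for every $\nu\in \sM_H(Y)$ the product measure $\mu_F\times\nu$ is $(G\times H)$-invariant, hence belongs to $\sM_{G\times H}(X\times Y)$ and is therefore annihilated by the boundary of any set furnished by the SBP of the product action.

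Next I would fix two distinct points $y_1,y_2\in Y$ and transport the separation problem into the product via the slice map $\iota_{x_0}\colon Y\to X\times Y$, $y\mapsto (x_0,y)$, which is a homeomorphism onto the closed set $\{x_0\}\times Y$. Since $(x_0,y_1)$ and $(x_0,y_2)$ are distinct and the product action has the SBP, Proposition~\ref{P-small separation}(3) yields an open set $W\subseteq X\times Y$ with $(x_0,y_1)\in W$, $(x_0,y_2)\notin\overline{W}$, and $\partial W$ that is $\sM_{G\times H}(X\times Y)$-small. Setting $V:=\iota_{x_0}^{-1}(W)=\{y: (x_0,y)\in W\}$, continuity of $\iota_{x_0}$ gives an open set $V\ni y_1$ with $y_2\notin\overline{V}$, and the same continuity shows $\partial V\subseteq \iota_{x_0}^{-1}(\partial W)=\{y:(x_0,y)\in\partial W\}$ (any $y\in\partial V$ satisfies $(x_0,y)\in\overline{W}\setminus W$).

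It remains to check that $\partial V$ is $\sM_H(Y)$-small, and this is where the finite orbit does the work. For any $\nu\in \sM_H(Y)$, integrating out the $X$-coordinate of the atomic measure $\mu_F$ gives $(\mu_F\times\nu)(\partial W)=\frac1m\sum_{i}\nu(\{y:(x_i,y)\in\partial W\})$, a sum of nonnegative terms. Since $\mu_F\times\nu$ is $(G\times H)$-invariant and $\partial W$ is $\sM_{G\times H}(X\times Y)$-small, the left-hand side vanishes, forcing each summand, in particular the one indexed by $x_0$, to be zero; as $\partial V\subseteq\{y:(x_0,y)\in\partial W\}$, we obtain $\nu(\partial V)=0$. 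Thus $(Y,\sM_H(Y))$ satisfies condition (3) of Proposition~\ref{P-small separation}, and the implication (3)$\Rightarrow$(1) there gives the SBP for $H\curvearrowright Y$.

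The only genuinely delicate point is the transfer of smallness from $\sM_{G\times H}(X\times Y)$ to $\sM_H(Y)$: the SBP of the product only controls boundaries against invariant measures of the product, so one needs a supply of such measures that \emph{see} an individual $Y$-slice with positive weight. The finite-orbit measures $\mu_F\times\nu$ supply exactly this, since $\mu_F$ charges the single point $x_0$ with mass $\frac1m>0$; absent any atomic invariant measure on $X$ there is no evident mechanism to isolate a slice, which is precisely the dichotomy that the lemma records.
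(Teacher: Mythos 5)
Your proposal is correct and follows essentially the same route as the paper: isolate a point of positive mass in $X$, use the SBP of the product to separate $(x_0,y_1)$ from $(x_0,y_2)$ by a set $W$ with $\sM_{G\times H}(X\times Y)$-small boundary, slice $W$ at $x_0$, and test the boundary against product measures of the form $(\text{atomic invariant measure})\times\nu$ to transfer smallness to $\sM_H(Y)$. The paper phrases this as a contradiction using the original atom $\mu(\{x\})>0$ directly rather than passing to the uniform measure on the finite orbit, but the mechanism is identical.
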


\begin{proof} 
Suppose that $H\curvearrowright Y$ does not have the SBP and there is a $\mu\in \sM_G(X)$ which is not atomless. 
By Proposition~\ref{P-small separation} we can find distinct $y_1, y_2\in Y$ such that for any open neighbourhood $U$ of $y_1$ satisfying $y_2\not\in \overline{U}$, the boundary $\partial U$ is not $\sM_H(Y)$-small. Also, we can find an $x\in X$ such that $\mu(\{ x \})>0$.

Since $G\times H\curvearrowright X\times Y$ has the SBP, we can find an open neighbourhood $O$ of $(x, y_1)$ in $X\times Y$ such that $(x, y_2)\not\in \overline{O}$ and $\partial O$ is $\sM_{G\times H}(X\times Y)$-small. Denote by $U$ the image of $O\cap (\{x\}\times Y)$ under the projection $X\times Y\rightarrow Y$. Then $U$ is an open neighbourhood of $y_1$, and $y_2\not\in \overline{U}$, and  $\{x\}\times \partial U\subseteq \partial O$. 
By our choice of $y_1$ and $y_2$, there exists
a $\nu\in \sM_H(Y)$ such that $\nu(\partial U)>0$. Now $\mu\times \nu\in \sM_{G\times H}(X\times Y)$, and
\begin{gather*}
(\mu\times \nu)(\partial O)\ge (\mu\times \nu) (\{x\}\times \partial U)=\mu(\{ x\})\nu(\partial U)>0,
\end{gather*}
a contradiction. Therefore either $H\curvearrowright Y$ has the SBP or $\sM_G(X)$ consists of atomless measures.
\end{proof}

Note that $\sM_G(X)$ consists of atomless measures if and only if $G\curvearrowright X$ has no finite orbits. 

\begin{proof}[Proof of Theorem~\ref{T-main}] 
(1)$\Rightarrow$(2). Let $H\curvearrowright Y$ be a continuous action of a countable group on a compact metrizable space, and let $(x_1, y_1)$ and $(x_2, y_2)$ be two distinct points in $X\times Y$. To show (2), by Proposition~\ref{P-small separation} it suffices to find an open set $O$ of $X\times Y$ such that $(x_1, y_1)\in O$, $(x_2, y_2)\not\in \overline{O}$, and $\partial O$ is $\sM_{G\times H}(X\times Y)$-small.

Consider first the case $x_1\neq x_2$. Since $G\curvearrowright X$ has the SBP, we can find an open set $U\subseteq X$ such that $x_1\in U$, $x_2\not\in \overline{U}$, and $\partial U$ is $\sM_G(X)$-small. Set $O=U\times Y$. Then $(x_1, y_1)\in O$ and $(x_2, y_2)\not\in \overline{O}$. Since $\partial O=\partial U\times Y$ and for every $\mu\in M_{G\times H}(X\times Y)$
the $\mu$-measure of this set is equal to the value of $\partial U$ on the marginal measure, which 
is invariant under $G$, we know that $\partial O$ is $\sM_{G\times H}(X\times Y)$-small.

Next consider the case $x_1=x_2$. Fix a compatible metric $\rho$ on $Y$ such that $\rho(y_1, y_2)>2$. For each $r\ge 0$ denote by $S_r$ the set $\{y\in Y: \rho(y, y_1)=r\}$. By assumption the action $G\curvearrowright X$ does not have finite orbits,
and so $\sM_G (X)$ consists of atomless measures, which means that we can apply Proposition~\ref{P-SBP to function},
with both $A_0$ and $A_1$ taken there to be the empty set, in order
to obtain a continuous function $f: X\rightarrow [0, 1]$ such that $f^{-1}(t)$ is $\sM_G(X)$-small for all $t\in [0, 1]$. Put $g=f+1$. Set
\begin{gather*}
O=\{(x, y)\in X\times Y: \rho(y, y_1)<g(x)\}.
\end{gather*}
Then $O$ is open in $X\times Y$, $(x_1, y_1)\in O$, and $(x_2, y_2)\not\in \overline{O}$. Note that
\begin{gather*}
\partial O \subseteq \{(x, y)\in X\times Y: \rho(y, y_1)=g(x)\}.
\end{gather*}
To show that $\partial O$ is $\sM_{G\times H}(X\times Y)$-small, it suffices 
to show that $\mu(\partial O)=0$ for every ergodic $\mu$ in $\sM_{G\times H}(X\times Y)$. 
Indeed if we knew the latter and $\mu$ were an arbitrary measure in $\sM_{G\times H}(X\times Y)$ then 
the ergodic disintegration $\int_Z \mu_z \, d\nu (z)$ of $\mu$ \cite[Theorem~8.7]{Gla03} 
would yield $\mu (\partial O) = \int_Z \mu_z (\partial O) \, d\nu (z) = 0$.

Let $\mu\in \sM_{G\times H}(X\times Y)$ be ergodic. By Lemma~\ref{L-ergodic for product} we have $\mu=\mu_X\times \mu_Y$ for some $\mu_X\in \sM_G(X)$ and $\mu_Y\in \sM_H(Y)$. Denote by $W$ the set of all $r\ge 0$ such that $\mu_Y(S_r)>0$. This set is countable since
the sets $S_r$ for $r\ge 0$ are pairwise disjoint, and so $\mu_X(g^{-1}(W))=0$. For each $x\in X$, denote by $(\partial O)_x$ the image of $\partial O\cap (\{x\}\times Y)$ under the projection $X\times Y\rightarrow Y$. Then
$$ \partial O=\bigcup_{x\in X}(\{x\}\times (\partial O)_x)\subseteq \bigcup_{x\in X} (\{x\}\times S_{g(x)}).$$
By Fubini's theorem,
\begin{align*}
\mu(\partial O)=\int_X \mu_Y((\partial O)_x)\, d\mu_X(x)=\int_{g^{-1}(W)}\mu_Y((\partial O)_x)\, d\mu_X(x)=0.
\end{align*}

(2)$\Rightarrow$(1). Take a continuous action $H\curvearrowright Y$ of a countable group on a compact metrizable space
such that the action does not have the SBP and 
$\sM_H(Y)$ does not consist entirely of atomless measures. For example, one may take the shift action 
$\Zb\curvearrowright [0, 1]^\Zb$, which has nonzero mean dimension \cite[Proposition~3.3]{LinWei00}
and hence fails to have the SBP \cite[Theorem~5.4]{LinWei00}.
By (2) the product action $G\times H\curvearrowright X\times Y$ has the SBP. It follows from Lemma~\ref{L-SBP to nonatomic} that (1) holds.
\end{proof}

\section{SBP pairs} \label{S-SBP pairs}

Let $G\curvearrowright X$ be a continuous action of a countable group a compact metrizable space. 
Set $\Delta(X)=\{(x, x): x\in X\}\subseteq X^2$.

Motivated by Proposition~\ref{P-small separation}, we make the following definition.

\begin{definition} \label{D-SBP pair}
We say that a pair $(x_1, x_2)\in X^2\setminus \Delta(X)$ is an {\it SBP pair} if there is no open neighbourhood $U$ of $x_1$ such that $x_2\not\in \overline{U}$ and $\partial U$ is $\sM_G(X)$-small. Denote by ${\rm SBP}_2(X)$ the set of all SBP pairs of $X$.
\end{definition}

For disjoint closed subsets $A_1$ and $A_2$ of $X$, we say that $A_1$ and $A_2$ are {\it separated by closed $\sM_G(X)$-small sets} if there is a partition $\{ U_1, Y, U_2 \}$ of $X$ such that $U_j$ is an open set with $A_j\subseteq U_j$ for $j=1, 2$ and $Y$ is $\sM_G(X)$-small. The following is straightforward.

\begin{lemma} \label{L-SBP split}
Let $A_1$ and $A_2$ be disjoint closed subsets of $X$ that are not separated by closed $\sM_G(X)$-small sets. Let $A_{1, 1}$ and $A_{1, 2}$ be closed subsets of $A_1$ such that $A_1=A_{1, 1}\cup A_{1, 2}$. Then either $A_{1, 2}$ and $A_2$ are not separated by closed $\sM_G(X)$-small sets or $A_{1, 2}$ and $A_2$ are not separated by closed $\sM_G(X)$-small sets.
\end{lemma}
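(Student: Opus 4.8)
The plan is to prove the contrapositive-style statement by a direct covering argument, but first I notice that the lemma as stated contains a typographical redundancy: the conclusion lists $A_{1,2}$ twice, and it should surely read ``either $A_{1,1}$ and $A_2$ are not separated by closed $\sM_G(X)$-small sets or $A_{1,2}$ and $A_2$ are not separated by closed $\sM_G(X)$-small sets.'' I will prove this corrected disjunction.

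First I would unwind the definition of separation. Suppose, toward a contradiction, that \emph{both} of the pairs $(A_{1,1}, A_2)$ and $(A_{1,2}, A_2)$ \emph{are} separated by closed $\sM_G(X)$-small sets. This gives partitions $\{U_1', Y', U_2'\}$ and $\{U_1'', Y'', U_2''\}$ of $X$, where $U_1', U_2', U_1'', U_2''$ are open, $Y', Y''$ are $\sM_G(X)$-small (and closed), with $A_{1,1}\subseteq U_1'$, $A_{1,2}\subseteq U_1''$, and $A_2\subseteq U_2'\cap U_2''$. The goal is to assemble from these ingredients a single separation of $A_1 = A_{1,1}\cup A_{1,2}$ from $A_2$, contradicting the hypothesis that $A_1$ and $A_2$ are not so separated.

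The key combinatorial step is to take $U_1 := U_1'\cup U_1''$ as the open neighbourhood of $A_1$ and $U_2 := U_2'\cap U_2''$ as the open neighbourhood of $A_2$, and then set $Y := X\setminus(U_1\cup U_2)$. I would check that $A_1\subseteq U_1$ and $A_2\subseteq U_2$ are immediate, that $U_1\cap U_2 = \emptyset$ because $U_2\subseteq U_2'\subseteq X\setminus U_1'$ and $U_2\subseteq U_2''\subseteq X\setminus U_1''$ (here I use that $\{U_1', Y', U_2'\}$ is a partition, so $U_1'$ and $U_2'$ are disjoint, and likewise for the double-primed sets), and finally that $Y$ is $\sN$-small for $\sN = \sM_G(X)$. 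For the smallness, the point is that a point omitted from both $U_1$ and $U_2$ lies in $X\setminus(U_1'\cup U_2')\cup (X\setminus(U_1''\cup U_2'')) = Y'\cup Y''$, whence $Y\subseteq Y'\cup Y''$; since a finite union of $\sN$-small sets is $\sN$-small (each has $\mu$-measure zero for every $\mu\in\sN$, and measures are subadditive), $Y$ is $\sN$-small, and it is closed as the complement of the open set $U_1\cup U_2$. Thus $\{U_1, Y, U_2\}$ is a partition of $X$ witnessing that $A_1$ and $A_2$ \emph{are} separated by closed $\sM_G(X)$-small sets, the desired contradiction.

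The only genuinely delicate point, and hence the main thing to verify carefully rather than the main obstacle, is the inclusion $Y\subseteq Y'\cup Y''$ together with the disjointness $U_1\cap U_2=\emptyset$; both rest on using the partition structure to convert complements of unions into the small pieces, and a quick set-theoretic computation $X\setminus(U_1\cup U_2) = (X\setminus U_1)\cap(X\setminus U_2)$ handled via the two given partitions. Everything else is bookkeeping, which is why the statement is flagged as straightforward. I would conclude by remarking that the same argument shows more generally that the family of pairs not separated by closed $\sN$-small sets is closed under the operation of splitting the first coordinate's closed set into finitely many closed pieces, which is exactly the subadditivity feature one expects when setting up a theory of SBP pairs in analogy with entropy pairs.
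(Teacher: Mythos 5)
Your proof is correct and supplies exactly the argument the paper has in mind: the paper offers no proof at all, simply declaring the lemma straightforward, and you also rightly identify and repair the typo in the statement (the first disjunct should read $A_{1,1}$ rather than $A_{1,2}$). The contrapositive construction $U_1 = U_1'\cup U_1''$, $U_2 = U_2'\cap U_2''$, $Y = X\setminus(U_1\cup U_2)\subseteq Y'\cup Y''$ is the intended witness, and your verification of disjointness and $\sM_G(X)$-smallness is complete.
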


\begin{proposition} \label{P-SBP basic}
The following hold.
\begin{enumerate}
\item $G\curvearrowright X$ does not have the SBP if and only if ${\rm SBP}_2(X)\neq \emptyset$.
\item ${\rm SBP}_2(X)\cup \Delta(X)$ is closed and $G$-invariant.
\item If $A_1$ and $A_2$ are disjoint closed subsets of $X$ such that $A_1$ and $A_2$ are not separated by closed $\sM_G(X)$-small sets, then $(A_1\times A_2)\cap {\rm SBP}_2(X)\neq \emptyset$.
\item Let $\pi:X\rightarrow Y$ be a factor map. Then $\pi\times \pi({\rm SBP}_2(X))\subseteq {\rm SBP}_2(Y)\cup \Delta(Y)$.
\end{enumerate}
\end{proposition}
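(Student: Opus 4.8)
The plan is to prove the four parts of Proposition~\ref{P-SBP basic} largely as direct consequences of the definitions and of Proposition~\ref{P-small separation}, treating them in order and reusing the separation machinery already developed.

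For part (1), I would simply restate the contrapositive of Proposition~\ref{P-small separation}, specifically the equivalence of (1) and (3) there. By Definition~\ref{D-SBP pair}, a pair $(x_1,x_2)$ fails to be an SBP pair precisely when there exists an open neighbourhood $U$ of $x_1$ with $x_2\notin\overline{U}$ and $\partial U$ being $\sM_G(X)$-small. Thus ${\rm SBP}_2(X)=\emptyset$ means that every pair of distinct points admits such a separating open set, which is exactly condition (3) of Proposition~\ref{P-small separation}, hence equivalent to the $\sM_G(X)$-SBP, i.e.\ to $G\curvearrowright X$ having the SBP. So ${\rm SBP}_2(X)\neq\emptyset$ is equivalent to the failure of the SBP.

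For part (2), $G$-invariance follows by transporting a separating neighbourhood through the homeomorphism given by a group element: if $U$ separates $(x_1,x_2)$ with $\sM_G(X)$-small boundary, then $gU$ separates $(gx_1,gx_2)$, and $\partial(gU)=g\,\partial U$ is still $\sM_G(X)$-small because $\sM_G(X)$ is $G$-invariant. Closedness of ${\rm SBP}_2(X)\cup\Delta(X)$ is cleanest to argue via its complement: the complement in $X^2$ consists of the diagonal-avoiding pairs that \emph{are} separated, and I would show this complement is open by noting that if $U$ separates $(x_1,x_2)$, then $U\times(X\setminus\overline{U})$ is an open neighbourhood of $(x_1,x_2)$ in $X^2$ all of whose points are separated by the same $U$. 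For part (3), the argument is a compactness reduction of the type already seen in the proof of Proposition~\ref{P-small separation}: if $(A_1\times A_2)\cap{\rm SBP}_2(X)=\emptyset$, then every pair $(a_1,a_2)\in A_1\times A_2$ is separated, and I would cover $A_1\times A_2$ by finitely many product neighbourhoods, intersect the pieces appropriately, and assemble a single partition $\{U_1,Y,U_2\}$ with $Y$ being $\sM_G(X)$-small (a finite union of boundaries), contradicting the hypothesis that $A_1,A_2$ are not separated by closed $\sM_G(X)$-small sets.

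I expect part (4) to be the main obstacle, since it is the only statement crossing between two systems. I would prove the contrapositive: suppose $(\pi x_1,\pi x_2)\notin{\rm SBP}_2(Y)\cup\Delta(Y)$, so $\pi x_1\neq\pi x_2$ and there is an open $V\subseteq Y$ with $\pi x_1\in V$, $\pi x_2\notin\overline{V}$, and $\partial V$ being $\sM_G(Y)$-small; I must produce a separating set for $(x_1,x_2)$ upstairs. The natural candidate is $U=\pi^{-1}(V)$, which is open, contains $x_1$, and satisfies $x_2\notin\overline{U}$ because $\pi^{-1}(\overline V)$ is closed and misses $x_2$; moreover $\partial U\subseteq\pi^{-1}(\partial V)$ by continuity of $\pi$. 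The crux is then to check that $\pi^{-1}(\partial V)$ is $\sM_G(X)$-small, for which I would use that every $\mu\in\sM_G(X)$ pushes forward to some $\pi_*\mu\in\sM_G(Y)$, giving $\mu(\pi^{-1}(\partial V))=(\pi_*\mu)(\partial V)=0$ since $\partial V$ is $\sM_G(Y)$-small; taking the supremum over $\mu\in\sM_G(X)$ yields the claim, and hence $(x_1,x_2)\notin{\rm SBP}_2(X)$.
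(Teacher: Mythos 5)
Your proposal is correct in all four parts, and for (1), (2) and (4) it supplies exactly the details the paper leaves as ``part of Proposition~\ref{P-small separation}'' and ``obvious'': the equivalence (1)$\Leftrightarrow$(3) of Proposition~\ref{P-small separation} for part (1), transport of a separating set by a group element together with the openness of the complement via $U\times(X\setminus\overline{U})$ for part (2), and the pullback $U=\pi^{-1}(V)$ with $\partial U\subseteq\pi^{-1}(\partial V)$ and $\mu(\pi^{-1}(\partial V))=(\pi_*\mu)(\partial V)=0$ for part (4).

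Where you genuinely diverge is part (3). The paper derives it from Lemma~\ref{L-SBP split}, i.e.\ by a bisection argument: if $A_1$ and $A_2$ are not separated by closed $\sM_G(X)$-small sets, one repeatedly splits them into finer closed pieces, always retaining a non-separated pair, and the nested intersections shrink to a pair of points which must then lie in ${\rm SBP}_2(X)$. You instead argue the contrapositive by a double compactness covering: assuming $(A_1\times A_2)\cap{\rm SBP}_2(X)=\emptyset$, for each $a_2\in A_2$ you take a finite union $V_{a_2}$ of separating neighbourhoods covering $A_1$ with $a_2\notin\overline{V_{a_2}}$ and $\partial V_{a_2}$ small, then intersect finitely many $V_{a_2}$ whose closed complements cover $A_2$, producing a single open $U\supseteq A_1$ with $\overline{U}\cap A_2=\emptyset$ and $\partial U$ small; the partition $\{U,\partial U, X\setminus\overline{U}\}$ then separates $A_1$ and $A_2$, a contradiction. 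This is the same two-step compactness scheme already used in the proof of Proposition~\ref{P-small separation}, so your route is self-contained and bypasses Lemma~\ref{L-SBP split} entirely; the paper's bisection lemma, on the other hand, is a reusable localization tool in the spirit of entropy pairs and is presumably stated for its own sake. Both arguments are valid, and yours is arguably the more economical for this particular proposition.
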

\begin{proof} 
The first statement is part of Proposition~\ref{P-small separation}, while (2) and (4) are obvious and (3) follows from Lemma~\ref{L-SBP split}.
\end{proof}

When $G$ is sofic and $\Sigma$ is a sofic approximation sequence for $G$, Garc\'{\i}a-Ramos and Gutman defined a pair $(x_1, x_2)\in X^2\setminus \Delta(X)$ to be a sofic mean dimension pair if for any disjoint closed neighbourhoods $A_1$ and $A_2$ of $x_1$ and $x_2$, respectively, one has ${\rm mdim}_\Sigma(\cU)>0$ for $\cU=\{X\setminus A_1, X\setminus A_2\}$ \cite[Definition 4.1]{GarGut24}. When $G$ is amenable, one can also define mean dimension pairs using ${\rm mdim}(\cU)$,
i.e., by averaging over F{\o}lner sets instead of a sofic approximation sequence. When $G$ is amenable and infinite, using \cite[Lemma 3.5]{Li13} and the proof of \cite[Lemma 3.7]{Li13} one can see that these two definitions are equivalent.

In the case that $G$ is amenable and infinite, Lindenstrauss and Weiss showed that if $G\curvearrowright X$ has the SBP then it has mean dimension zero \cite[Theorem 5.4]{LinWei00}. Their argument also works here.

\begin{theorem} \label{T-mdim pair to SBP pair}
Suppose that $G$ is amenable and infinite. Then every mean dimension pair is an SBP pair.
\end{theorem}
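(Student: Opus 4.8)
We argue by contraposition, showing that a pair which is not an SBP pair cannot be a mean dimension pair. Suppose $(x_1,x_2)\in X^2\setminus\Delta(X)$ is not an SBP pair. By Definition~\ref{D-SBP pair} there is an open neighbourhood $U$ of $x_1$ with $x_2\notin\overline{U}$ and $\partial U$ being $\sM_G(X)$-small. Choose closed neighbourhoods $A_1$ of $x_1$ and $A_2$ of $x_2$ with $A_1\subseteq U$ and $A_2\subseteq X\setminus\overline{U}$ (for instance $A_1=\overline{V_1}$ for an open $V_1$ with $x_1\in V_1\subseteq\overline{V_1}\subseteq U$, and similarly for $A_2$); these are disjoint closed neighbourhoods. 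It suffices to prove that ${\rm mdim}(\cU)=0$ for $\cU=\{X\setminus A_1,X\setminus A_2\}$, since this shows directly that $(x_1,x_2)$ fails the defining condition of a mean dimension pair.

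Fix $\eps>0$. Since $\partial U$ is $\sM_G(X)$-small and $\sM_G(X)$ is weak$^*$ compact, the argument of Lemma~\ref{L-small separation} (regularity of each invariant measure together with the portmanteau theorem and a finite subcover of $\sM_G(X)$) produces an open set $N$ with $\partial U\subseteq N$, $\overline{N}\cap(A_1\cup A_2)=\emptyset$, and $\sup_{\mu\in\sM_G(X)}\mu(\overline{N})<\eps$. Put $P_1=U\cup N$ and $P_2=(X\setminus\overline{U})\cup N$. Then $\{P_1,P_2\}$ is an open cover of $X$ refining $\cU$, with overlap $P_1\cap P_2=N$. Consequently ${\rm mdim}(\cU)\le{\rm mdim}(\{P_1,P_2\})$, and it is enough to bound the latter by a multiple of $\eps$.

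At this point the argument of Lindenstrauss and Weiss \cite[Theorem~5.4]{LinWei00} applies. Away from the thin overlap $N$ the cover $\{P_1,P_2\}$ is a genuine two-set partition, so for a F{\o}lner set $F$ one can refine the dynamical join $\bigvee_{g\in F}g^{-1}\{P_1,P_2\}$ by an open cover whose order is bounded by $\max_{x\in X}\#\{g\in F:gx\in N\}$, since the multiplicity of the join can exceed $1$ only at coordinates where the orbit visits the overlap. It remains to control this count along a F{\o}lner sequence $\{F_n\}$, and a standard compactness argument does so: any weak$^*$ cluster point of the empirical measures $\frac{1}{|F_n|}\sum_{g\in F_n}\delta_{gx}$ is $G$-invariant, so using upper semicontinuity of $\mu\mapsto\mu(\overline{N})$ for the closed set $\overline{N}$ one gets $\limsup_n\frac{1}{|F_n|}\max_{x}\#\{g\in F_n:gx\in\overline{N}\}\le\sup_{\mu\in\sM_G(X)}\mu(\overline{N})<\eps$. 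Dividing the order bound by $|F_n|$ and letting $n\to\infty$ yields ${\rm mdim}(\{P_1,P_2\})\le\eps$, hence ${\rm mdim}(\cU)\le\eps$. Letting $\eps\to0$ gives ${\rm mdim}(\cU)=0$, as required.

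The main obstacle is the passage from the measure estimate $\sup_{\mu}\mu(\overline{N})<\eps$, which controls only F{\o}lner \emph{averages}, to a uniform (supremum over $x$) bound on the order of a refinement of the dynamical join. This is exactly the combinatorial step supplied by the small boundary property in \cite{LinWei00}, and it is where the assumption that $G$ is amenable and infinite enters, through the F{\o}lner averaging and the tiling structure underlying the construction of the low-order refinement.
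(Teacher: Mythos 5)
Your proof is correct and takes the same route the paper intends: the paper's entire ``proof'' is the remark that the Lindenstrauss--Weiss argument for ``SBP implies mean dimension zero'' localizes to a single pair, and you have carried out exactly that localization (contrapositive, two-set cover $\{P_1,P_2\}$ overlapping only in a thin neighbourhood of $\partial U$, order bound via the face-counting map, F{\o}lner averaging against invariant measures). The only cosmetic point is that since $\partial U$ is itself closed and $\sM_G(X)$-small, the portmanteau/empirical-measure step already gives $\limsup_n\frac{1}{|F_n|}\max_x\#\{g\in F_n:gx\in\partial U\}=0$ directly, so the $\eps$-thickening $N$ is needed only to build the continuous $\{0,1\}$-valued-off-$N$ function, not for the measure estimate.
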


\begin{question} \label{Q-SBP pair to mdim pair}
For free minimal actions of $\Zb^d$, must SBP pairs be mean dimension pairs?
\end{question}

\end{document}